\documentclass[9pt,journal]{IEEEtran}

\usepackage{cite}
\usepackage{graphicx}
\usepackage{amsmath,amssymb,amsthm}
\usepackage{algorithm}
\usepackage{algpseudocode}
\usepackage{dblfloatfix}
\usepackage{url}

\newtheorem{theorem}{Theorem}

\newtheorem{assumption}{Assumption}
\newtheorem{lemma}{Lemma}

\theoremstyle{definition}
\newtheorem{example}{Example}[section]

\DeclareMathOperator{\diag}{diag}
\newcommand{\vx}{\mathbf{x}}
\newcommand{\vy}{\mathbf{y}}
\newcommand{\vu}{\mathbf{u}}
\newcommand{\vv}{\mathbf{v}}
\newcommand{\vw}{\mathbf{w}}
\newcommand{\ve}{\mathbf{e}}
\newcommand{\vz}{\mathbf{z}}
\newcommand{\mG}{\mathbf{G}}
\newcommand{\mB}{\mathbf{B}}
\newcommand{\mI}{\mathbf{I}}
\newcommand{\mP}{\mathbf{P}}
\newcommand{\mS}{\mathbf{S}}

\newcommand{\mH}{\mathbf{H}}
\newcommand{\mC}{\mathbf{C}}
\newcommand{\mQ}{\mathbf{Q}}
\newcommand{\mR}{\mathbf{R}}
\newcommand{\mW}{\mathbf{W}}
\newcommand{\mV}{\mathbf{V}}
\newcommand{\mTheta}{\mathbf{\Theta}}
\newcommand{\valpha}{\boldsymbol{\alpha}}
\newcommand{\vphi}{\boldsymbol{\varphi}}
\newcommand{\vtheta}{\boldsymbol{\theta}}
\newcommand{\vTheta}{\boldsymbol{\Theta}}
\newcommand{\vPhi}{\boldsymbol{\Phi}}
\newcommand{\vGamma}{\boldsymbol{\Gamma}}
\newcommand{\vbeta}{\boldsymbol{\beta}}
\newcommand{\vmu}{\boldsymbol{\mu}}
\newcommand{\veta}{\boldsymbol{\eta}}
\newcommand{\vU}{\mathbf{U}}
\usepackage{color}

\makeatletter
\long\def\@makecaption#1#2{%
\ifx\@captype\@IEEEtablestring%
\footnotesize\bgroup\par\centering\@IEEEtabletopskipstrut{\normalfont\footnotesize #1}\\{\normalfont\footnotesize\scshape #2}\par\addvspace{0.5\baselineskip}\egroup%
\@IEEEtablecaptionsepspace
\else
\@IEEEfigurecaptionsepspace
\setbox\@tempboxa\hbox{\normalfont\footnotesize {#1.}\nobreakspace\nobreakspace #2}%
\ifdim \wd\@tempboxa >\hsize%
\setbox\@tempboxa\hbox{\normalfont\footnotesize {#1.}\nobreakspace\nobreakspace}%
\parbox[t]{\hsize}{\normalfont\footnotesize\centering\unhbox\@tempboxa#2\par}%
\else
\hbox to\hsize{\normalfont\footnotesize\hfil\box\@tempboxa\hfil}%
\fi
\fi}
\makeatother
\begin{document}

\title{Revealing Strategic Interactions in Network Games Under Decaying Active Probing}

% \author{Xiaoyu~Xin, Longxu~Zhang, Jinlong~Lei, Wenxiao~Zhao, and Yiguang~Hong%
% \thanks{Xiaoyu Xin, Jinlong Lei, and Yiguang Hong are with Shanghai Autonomous Intelligent Unmanned Systems Science Center, Tongji University, Shanghai 201210, China (e-mail: 2151471@tongji.edu.cn; leijinlong@tongji.edu.cn; yghong@tongji.edu.cn).}%
% \thanks{Longxu Zhang and Wenxiao Zhao are with the Academy of Mathematics and Systems Science, Chinese Academy of Sciences, Beijing 100190, China (e-mail: zhanglongxu@amss.ac.cn; wxzhao@amss.ac.cn).}}

\author{
Xiaoyu~Xin\textsuperscript{1},
Longxu~Zhang\textsuperscript{2},
Jinlong~Lei\textsuperscript{1},
Wenxiao~Zhao\textsuperscript{2},
and Yiguang~Hong\textsuperscript{1}
}

\maketitle

\begingroup
\renewcommand{\thefootnote}{}
\footnotetext{
\textsuperscript{1}Xiaoyu Xin, Jinlong Lei, and Yiguang Hong are with the Shanghai Autonomous Intelligent Unmanned Systems Science Center, Tongji University, Shanghai 201210, China (e-mail: 2151471@tongji.edu.cn; leijinlong@tongji.edu.cn; yghong@tongji.edu.cn).

\textsuperscript{2}Longxu Zhang and Wenxiao Zhao are with the Academy of Mathematics and Systems Science, Chinese Academy of Sciences, Beijing 100190, China (e-mail: zhanglongxu@amss.ac.cn; wxzhao@amss.ac.cn).}
\endgroup

\markboth{IEEE Transactions on Automatic Control, submitted for review}%
{Xin \MakeLowercase{\textit{et al.}}: Strategic Interactions Under Decaying Active Probing}

\maketitle
\bstctlcite{BSTcontrol}

\begin{abstract}
Revealing the interaction topology underlying strategic behavior is fundamental to prediction, intervention, and policy design in networked systems. Yet the interaction matrix is often unobservable, and passive observation of repeated actions fails to provide sufficient excitation for reliable recovery. This paper studies topology recovery in repeated linear-quadratic network games under decaying active probing, where probing inputs are injected into a subset of players and the unknown interaction matrix is inferred from the resulting action trajectories. We first characterize a structural recoverability condition that determines when noiseless probing experiments can make the interaction matrix identifiable. We then show that, under suitable stability and controllability assumptions, a concrete decaying probing signal guarantees exact finite-step recovery while preserving convergence of the repeated-play process. To handle decision perturbations, we further develop a reweighted sparse estimator that achieves almost-sure consistency together with finite-time exact support recovery. These results clarify what can be recovered in both noiseless and perturbed settings.
\end{abstract}

\begin{IEEEkeywords}
Network games, topology recovery, decaying active probing, sparse recovery, finite-time exact support recovery.
\end{IEEEkeywords}

\section{Introduction}

\IEEEPARstart{S}{trategic} interactions in networked systems are often governed by latent interdependence structures: one player's decision influences others, which in turn feeds back into future behavior. Revealing such interaction topologies is important for forecasting collective responses, designing interventions, evaluating policies, and pinpointing influential or vulnerable players. These questions arise in social and economic networks, decentralized markets, and engineered multi-player systems, where the effectiveness of prediction and control depends crucially on how strategic couplings are organized \cite{galeotti2020targeting,giannakis2018topology,jackson2015games,parise2023graphon}. For instance, peer effects in social groups, local competition in markets, and coordinated load adjustment in energy systems all induce strategic responses over latent interaction networks \cite{bramoulle2014strategic,schlund2022dynamic,liu2023demandresponse}.

To systematically analyze such hidden strategic couplings, network games provide a natural modeling framework \cite{ballester2006keyplayer,bramoulle2016games,jackson2015games}. In particular, linear-quadratic network games are especially attractive because they retain the essential strategic coupling structure while remaining analytically tractable. Equilibrium actions and repeated responses in these games are tightly linked to the underlying network structure, and many behavioral properties can be characterized through centrality-type quantities \cite{ballester2006keyplayer,bramoulle2014strategic,parise2023graphon}. This close connection between behavior and topology has made topology recovery a central problem in the study of strategic interactions.

In many applications, however, the interaction topology is hidden from direct observation and cannot be read off from prior system knowledge. The strength of peer effects is latent in social systems, substitution and complementarity patterns are hidden in decentralized markets, and coupling channels are only partially observable in engineered strategic systems. As a result, one often has access only to observed actions, rather than to the true interaction matrix itself \cite{pmlr-v119-leng20a,10551735,ghoshal2017behavioral,ghoshal2018polymatrix}. This creates a fundamental inverse problem: can the hidden network of strategic influence be recovered from behavioral data alone?

A growing literature has studied this question from observed player actions \cite{pmlr-v119-leng20a,10551735,ghoshal2017behavioral,ghoshal2018polymatrix,kuleshov2015inverse,11164537}. Existing approaches have mainly focused on passive observations, such as equilibrium snapshots or repeated action trajectories, under parametric or learning-based formulations. Related graph-learning and network-inference works also show how latent structure may be inferred from data, although they do not exploit repeated strategic best responses directly \cite{giannakis2018topology,dong2019learning}. These studies indicate that behavior can reveal latent strategic structure, even when the utility parameters are not fully known. However, passive observation faces a basic limitation in repeated interactions. Players typically do not jump directly to equilibrium; instead, they adjust actions gradually in response to others, generating trajectories that are observable but progressively less informative. As repeated play approaches a steady state, state variations diminish and the data lose excitation, rendering the network structure not recoverable from passive data alone. This motivates the need for active probing mechanisms that deliberately inject informative perturbations into the game to excite the system and reveal the hidden interaction matrix \cite{ljung1999system,chen2012identification}.

Motivated by this observation, we consider active probing of a subset of players and infer the hidden interaction matrix from the induced responses. Under this formulation, successful topology recovery depends not only on the unknown interaction matrix, but also on the probing channel and the excitation pattern supplied to the game. The goal is, therefore, to {investigate} when active probing makes topology recovery possible and  {how} to construct a concrete probing-and-estimation scheme that preserves convergence of the repeated-play process.

The main contributions of this paper are as follows. First of all, we formulate topology recovery under active probing for repeated linear-quadratic network games and characterize a structural recoverability condition for noiseless probing experiments.
     We then construct a decaying probing scheme that preserves convergence of the repeated-play process while still generating sufficiently informative data, and we show that it guarantees exact finite-step recovery of the interaction matrix under suitable conditions. Finally, under decision perturbations, we develop a reweighted sparse estimator and establish almost-sure consistency together with finite-time exact support recovery. 

The remainder of the paper is organized as follows. Section II
formulates the probed repeated-play model. Section III studies struc-
tural recoverability and decaying probing in the noiseless setting. Section IV addresses decision perturbations and sparse recovery. Section V
reports simulation results, and the appendices collect the technical
proofs.

\section{Problem Formulation}

Consider a repeated network game with $N$ players indexed by $V=\{1,2,\dots,N\}$. Let $x_i(t)\in\mathbb{R}$ denote the action of player $i$ at time $t=0,1,\ldots$, and define the collective action vector by
\[
\vx_t=\begin{bmatrix}x_1(t)&x_2(t)&\cdots&x_N(t)\end{bmatrix}^{\top}\in\mathbb{R}^N.
\]
The strategic dependence among players is described by an unknown interaction matrix $\mG=[g_{ij}]\in\mathbb{R}^{N\times N}$, where $g_{ij}$ represents the influence of player $j$ on player $i$. Throughout the paper, we take $g_{ii}=0$ for all $i\in V$. Our goal is to recover $\mG$ from repeated action observations.

In particular, we consider a linear-quadratic network game \cite{ballester2006keyplayer,bramoulle2014strategic}, in which player $i$ chooses a scalar action $x_i\in\mathbb{R}$ to maximize 
$$
J_i(x_i,\vx_{-i})
= \alpha_i x_i - \frac{1}{2}x_i^2
+ \sum_{j\neq i} g_{ij} x_i x_j,
$$
where $\alpha_i\in\mathbb{R}$ is an unknown marginal utility parameter and $\vx_{-i}$ denotes the actions of all other players. Define $\valpha=[\alpha_1,\ldots,\alpha_N]^\top$. 
Hence, at a Nash equilibrium, there holds
$
x_i^\ast = \alpha_i + \sum_{j\neq i} g_{ij}x_j^\ast,
$
and stacking these equations yields
and, in vector form,
\begin{equation}\label{NE}
   (\mI-\mG)\vx^\ast=\valpha.
\end{equation}

We model the repeated play via myopic best-response updates, i.e., at time $t+1$, player $i$ chooses
$
x_i(t+1)
= \arg\max_{x_i} J_i(x_i,\vx_{-i}(t)).
$
By the first-order optimality condition, the resulting dynamics are
$$
x_i(t+1)
= \alpha_i + \sum_{j\neq i} g_{ij}x_j(t),
\quad i\in V,
$$
and, in vector form,
\begin{equation}\label{eq:model_original}
    \vx_{t+1}
    = \valpha + \mG\vx_t.
\end{equation}

When the repeated best-response dynamics are stable, the trajectory approaches the Nash equilibrium $\vx^\ast$ and progressively loses excitation, which makes passive observation insufficient for reliable topology recovery.
To overcome this limitation, we assume that probing inputs can be injected into a subset of players. Let $\vu_t\in\mathbb{R}^N$ denote the probing signal and let $\mB=\diag(b_1,\dots,b_N)$ be a known selection matrix, where $b_i=1$ indicates that player $i$ can be directly excited and $b_i=0$ otherwise. The initial probing input $\vu_0$ is prescribed, and input values before the experiment are taken as zero when finite-history regressors are used. The resulting actively probed repeated-play dynamics are
\begin{equation}\label{eq:model_probed}
    \vx_{t+1}
    =
    \valpha
    +
    \mG\vx_t
    +
    \mB\vu_t.
\end{equation}
The probing signal is the key mechanism that actively excites the repeated game so that the hidden interaction matrix becomes recoverable from the observed responses.

\begin{example}
Consider a residential demand-response program \cite{schlund2022dynamic,liu2023demandresponse}, where each player is a household and $x_i(t)$ denotes its electricity-consumption adjustment relative to a baseline profile. The interaction coefficient $g_{ij}$ is generally unknown and represents how the adjustment of household $j$ affects household $i$. Such influence may reflect peer effects, shared local information, or physical coupling through the distribution network, and it is rarely available to the aggregator in advance. To reveal this hidden structure, the aggregator may apply temporary incentives only to selected households, such as short-term rebates, targeted price discounts, or device-level control commands. These incentives correspond to the probing channel $\mB\vu_t$ in our model. By observing how the induced adjustments propagate through repeated household responses, the aggregator can infer the hidden interaction matrix $\mG$.
\end{example}

% Based on the above formulation, this paper addresses the following two questions.
% \begin{enumerate}
%     \item Under what conditions can the interaction matrix $\mG$ be uniquely recovered from repeated trajectory data generated by the probed game?
%     \item How should the probing input $\vu_t$ be designed so that the interaction matrix $\mG$ can be exactly recovered in finite time while preserving convergence of the repeated-play process?
% \end{enumerate}

\section{Identifiability and Exact Recovery}

This section studies structural recoverability and probing design for the repeated-play model \eqref{eq:model_probed}. We first characterize when a probing experiment can be   informative enough for identification, and then show that a decaying probing signal can guarantee exact finite-step recovery while preserving convergence of the repeated-play process.

\subsection{Recoverability Conditions}

\begin{assumption}\label{ass:noiseless_stability}
The spectral radius of $\mG$ satisfies $\rho(\mG)<1$.
\end{assumption}

Assumption~\ref{ass:noiseless_stability} ensures that the repeated best-response dynamics are stable and admit a unique Nash equilibrium.

\begin{theorem}\label{thm:noiseless_recoverability}
Under Assumption~\ref{ass:noiseless_stability}, there exists an initial state $\vx_0$ and a probing sequence $\vu_t$ for which $\mG$ is uniquely
recoverable from the resulting trajectory  if and only if
\[
\operatorname{rank}\begin{bmatrix}\lambda \mI-\mG, \valpha, \mB\end{bmatrix}\ge N-1,
\quad \forall \lambda\in\mathbb{C},
\]
\end{theorem}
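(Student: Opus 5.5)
The plan is to reduce recoverability to a rank property of the data regressors, and then to translate that property into the PBH-type condition in the statement. Since $\mB$ and $\vu_t$ are known, I set $\vy_t:=\vx_{t+1}-\mB\vu_t$ and the augmented regressor $\vz_t:=[\vx_t^\top,1]^\top\in\mathbb{R}^{N+1}$. Then
\[
\vy_t=[\mG,\ \valpha]\,\vz_t .
\]
Row $i$ has $N$ unknowns: the off-diagonal entries $g_{ij}$, $j\neq i$, and $\alpha_i$. These are the only unknowns because $g_{ii}=0$ is known.

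\textbf{Step 1 (data criterion).} Let $\mathcal S=\operatorname{span}\{\vz_t\}_{t\ge0}$. I will show that $\mG$ (with $\valpha$ as a by-product) is uniquely determined by the trajectory if and only if
\[
\mathcal S+\operatorname{span}\{\ve_i\}=\mathbb{R}^{N+1}\quad\text{for every } i\le N .
\]
Equivalently, every nonzero $\vw\in\mathcal S^\perp$ has $w_i\neq0$ for all $i\le N$. If this fails for some $i$ with witness $\vw$, then $w_i=0$ and $\vw\neq0$. The alternative parameters $[\mG,\valpha]+\ve_i\vw^\top$ keep the $(i,i)$ entry zero and reproduce the same data, so identification fails. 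Conversely, the condition makes the restricted regressors of each row span $\mathbb{R}^N$, and least squares row by row gives uniqueness.

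The criterion can be restated by dimension. It holds exactly when either $\mathcal S=\mathbb{R}^{N+1}$, or $\mathcal S$ is a hyperplane whose normal vector has all of its first $N$ entries nonzero. If $\dim\mathcal S^\perp\ge2$, some nonzero combination of two independent annihilators kills the $i$-th entry, so the criterion fails.

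\textbf{Step 2 (Krylov description).} The augmented state obeys $\vz_{t+1}=\mathbf A\vz_t+\bar\mB\vu_t$, where
\[
\mathbf A=\begin{bmatrix}\mG&\valpha\\0&1\end{bmatrix},\qquad \bar\mB=\begin{bmatrix}\mB\\0\end{bmatrix}.
\]
With freely chosen $\vx_0$ and $\vu_t$, $\mathcal S$ can be made equal to the smallest $\mathbf A$-invariant subspace containing $\vz_0$ and $\operatorname{im}\bar\mB$. Generic inputs achieve this by the standard persistent-excitation argument, and no larger span is ever possible.

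So the question becomes the following. Does there exist $\vz_0$, with last entry $1$, such that $\mathcal C+\mathcal K(\mathbf A,\vz_0)$ has codimension at most one with a "nowhere-zero" normal? Here $\mathcal C$ is the controllable subspace of $(\mathbf A,\bar\mB)$ and $\mathcal K(\mathbf A,\vz_0)$ is the Krylov subspace of $\vz_0$.

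The quotient $\mathbb{R}^{N+1}/\mathcal C$ carries the uncontrollable dynamics. Adding one vector $\vz_0$ produces a cyclic subspace of that quotient. Its codimension is at most one if and only if every eigenvalue of the quotient map has geometric multiplicity at most two, and at most one for all but a single eigenvalue. By PBH this is a condition on $\operatorname{rank}[\lambda\mI-\mathbf A,\ \bar\mB]$.

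A left null vector $[\vw^\top, c]$ of $[\lambda\mI-\mathbf A,\ \bar\mB]$ satisfies $c(\lambda-1)=\vw^\top\valpha$, $\vw^\top(\lambda\mI-\mG)=0$ and $\vw^\top\mB=0$. So the left kernels of $[\lambda\mI-\mathbf A,\ \bar\mB]$ correspond to those of $[\lambda\mI-\mG,\ \valpha,\ \mB]$, up to the extra $c$-direction available at $\lambda=1$. Assumption~\ref{ass:noiseless_stability} ensures $1\notin\sigma(\mG)$, which handles $\lambda=1$ cleanly: there the left null vector is forced to have $\vw=0$, and this is precisely the annihilator that the constraint "last entry of $\vz_0$ equals $1$" defeats.

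\textbf{Step 3 (necessity and sufficiency).} For necessity, suppose $\operatorname{rank}[\lambda\mI-\mG,\valpha,\mB]\le N-2$ at some $\lambda$. Take two independent $\vw_1,\vw_2$ with $\vw_k^*(\lambda\mI-\mG)=0$, $\vw_k^*\valpha=0$ and $\vw_k^*\mB=0$. Then $\vw_k^*\vx_{t+1}=\lambda\,\vw_k^*\vx_t$. Choose a nonzero $(c_1,c_2)$ with $(c_1\vw_1+c_2\vw_2)^*\vx_0=0$. The combined vector then annihilates every $\vx_t$, and padding it with a zero gives an annihilator of every $\vz_t$. One more degree of freedom is needed to zero an $i$-th entry. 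I will take it from a third annihilator of the form $[\mathbf 0^\top,1]-[\vv^\top,\cdot]$ when $\lambda$ is real, or from the real and imaginary parts when $\lambda$ is complex. Then Step 1 fails.

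For sufficiency, under the rank condition I choose $\vx_0$ to be a cyclic generator of the quotient dynamics. The remaining codimension is then at most one, with a normal coming from a single simple left eigenvector. If that normal has a zero entry, I perturb $\vx_0$ generically, or argue that this case is already covered by a smaller annihilator.

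\textbf{Main obstacle.} The hard step is the exact bookkeeping between "codimension $\le1$ with nowhere-zero normal" and "rank $\ge N-1$." In particular, I have to show that a single deficient mode never forces a zero entry in the normal that cannot be removed, and I have to handle complex $\lambda$ over the reals. I would first attempt the case where $\mathbf A$ is diagonalizable, and then extend via Jordan chains.
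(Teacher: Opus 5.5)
\textbf{There is a genuine gap in Step~2, and it cannot be repaired.} You identify the left kernels of $[\lambda\mI-\mathbf A,\ \bar\mB]$ with those of $[\lambda\mI-\mG,\ \valpha,\ \mB]$. Your own equations contradict this. For $\lambda\neq1$, the relation $c(\lambda-1)=\vw^\top\valpha$ simply \emph{defines} $c$, so no constraint $\vw^\top\valpha=0$ appears. The left kernel is therefore isomorphic to that of $[\lambda\mI-\mG,\ \mB]$: the $\valpha$-column drops out because a constant input is absorbed by the intercept.

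Concretely, set $\vx^\ast=(\mI-\mG)^{-1}\valpha$ and $\mathbf d=\vx_0-\vx^\ast$; $\mathbf d$ is free because $\vx_0$ is. Then every regressor satisfies
\[
\vz_t\in\operatorname{span}\{[\vx^{\ast\top},1]^\top\}\oplus\bigl(\mathcal K(\mG,[\mathbf d,\mB])\times\{0\}\bigr),
\]
with equality of the span for generic inputs. The annihilators are exactly $[\mathbf n^\top,-\mathbf n^\top\vx^\ast]^\top$ with $\mathbf n\perp\mathcal K(\mG,[\mathbf d,\mB])$. Hence your (correct) Step~1 criterion is achievable if and only if some $\mathbf d$ makes $\mathcal K(\mG,[\mathbf d,\mB])^\perp$ either $\{0\}$ or a line spanned by a vector with no zero entry. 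That condition depends only on $(\mG,\mB)$, while the target rank condition depends on $\valpha$. So Step~3 cannot be completed in either direction. The ``third annihilator'' you need for necessity need not exist. For sufficiency, a cyclic generator of the quotient need not leave codimension $\le1$. Separately, your multiplicity criterion for codimension $\le1$ is also wrong: two $2\times2$ Jordan blocks at one eigenvalue leave codimension two. What matters is the dimension minus the degree of the minimal polynomial.

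\textbf{Counterexamples under your own criterion.} Necessity fails for $N=2$, $\mG=0$, $\valpha=0$, $\mB=0$. The rank condition fails at $\lambda=0$. Yet if $x_{0,1}x_{0,2}\neq0$, the transitions $\vx_0\mapsto\vx_1=0$ and $0\mapsto0$ force $\hat\alpha_i=0$ and $\hat g_{12}x_{0,2}=\hat g_{21}x_{0,1}=0$, so $\hat\mG=\mG$. Sufficiency fails for $N=4$, $\mB=0$, $\valpha=\ve_1$, $\mG=\tfrac12\begin{bmatrix}0&\mI_2\\ \mI_2&0\end{bmatrix}$. Here $\rho(\mG)=\tfrac12$ and $\operatorname{rank}[\lambda\mI-\mG,\valpha,\mB]\ge3$ for all $\lambda$. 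But $\mG^2=\tfrac14\mI$ confines every $\vz_t$ to a subspace of $\mathbb{R}^5$ of dimension at most $3$, so each row's four unknowns are never determined.

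\textbf{How the paper proceeds.} The paper's proof never analyses the data. It takes ``recoverable'' to mean that some $\vx_0$ makes $(\mG,[\vx_0,\valpha,\mB])$ controllable. It proves necessity by deleting the column $\vx_0$, which lowers the rank by at most one. It proves sufficiency by noting that $\mathbb{R}^N$ is not a finite union of the proper subspaces $\{\vx:q_i^*\vx=0\}$. That equivalence is correct linear algebra, and under that reading the whole proof is those few lines. Your framework shows, however, that this reading is not trajectory identifiability: in the $N=4$ example, $(\mG,[\vx_0,\valpha])$ is controllable for generic $\vx_0$, yet recovery fails. To reproduce the stated theorem you must adopt the paper's reading. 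Carried out correctly, your approach yields an $\valpha$-free condition instead. For example, if full-rank regressors are required, the condition becomes $\operatorname{rank}[\lambda\mI-\mG,\mB]\ge N-1$ for all $\lambda$.
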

\begin{proof}
We first prove necessity. If there exists $\vx_0$ such that $(\mG,[\vx_0,\valpha,\mB])$ is controllable, then 
\[
\operatorname{rank}
\begin{bmatrix}
\lambda \mI-\mG,  \vx_0,  \valpha,  \mB
\end{bmatrix}
=
N,
\quad \forall \lambda\in\mathbb{C}.
\]
Removing the single column $\vx_0$ can reduce the rank by at most one. Therefore,
\[
\operatorname{rank}
\begin{bmatrix}
\lambda \mI-\mG , \valpha , \mB
\end{bmatrix}
\ge N-1,
\quad \forall \lambda\in\mathbb{C}.
\]

We next prove sufficiency. Suppose, to the contrary, that for every $\vx_0\in\mathbb{R}^N$, the augmented pair $(\mG,[\vx_0,\valpha,\mB])$ is uncontrollable. Then for each $\vx_0\in\mathbb{R}^N$ there exist an eigenvalue $\lambda\in\sigma(\mG)$ and a nonzero vector $q\in\mathbb{C}^N$ such that
\begin{equation*}
q^\top(\lambda \mI-\mG)=0,\quad q^\top\vx_0=0,\quad q^\top\valpha=0,\quad q^\top\mB=0.
\end{equation*}

Let $\sigma(\mG)=\{\lambda_1,\dots,\lambda_r\}$ be the set of distinct eigenvalues of $\mG$. For each $\lambda_i$, define
\[
\mathcal{N}_i
:=
\{q\in\mathbb{C}^N:\ q^\top(\lambda_i \mI-\mG)=0,\ q^\top\valpha=0,\ q^\top\mB=0\}.
\]
Because
\[
\operatorname{rank}
\begin{bmatrix}
\lambda_i \mI-\mG , \valpha , \mB
\end{bmatrix}
\ge N-1,
\]
the left null space $\mathcal{N}_i$ has dimension at most one for every $i$.

For each index $i$ with $\mathcal{N}_i\neq\{0\}$, choose a nonzero vector $q_i$ spanning $\mathcal{N}_i$. The contradiction assumption then implies that for every $\vx_0\in\mathbb{R}^N$, there exists some $i$ such that $q_i^*\vx_0=0$. Hence
\[
\mathbb{R}^N
=
\bigcup_{i:\,\mathcal{N}_i\neq\{0\}}
\{x\in\mathbb{R}^N:\ q_i^* x=0\}.
\]
Each set on the right-hand side is a proper linear subspace of $\mathbb{R}^N$. A finite union of proper linear subspaces cannot cover $\mathbb{R}^N$, which is a contradiction. Therefore, there exists $\vx_0\in\mathbb{R}^N$ such that $(\mG,[\vx_0,\valpha,\mB])$ is controllable.
\end{proof}

\noindent
{Theorem}~\ref{thm:noiseless_recoverability} is a existence result: it tells us when a  probing experiment can be informative enough to recover the interaction matrix from the relation $\vx_{t+1}-\mB\vu_t=\valpha
+\mG\vx_t$. However, it does not by itself give a constructive probing law.
Therefore, we next study a concrete decaying probing scheme under a stronger assumption that ensures the whole network is excited.

\subsection{Decaying Probing Signals and Convergence Preservation}

% The rank condition in Proposition~\ref{thm:noiseless_recoverability} characterizes structural recoverability under a suitable initial state, but it does not by itself provide a constructive probing law that excites the whole network. We therefore next study a concrete decaying probing scheme together with a stronger assumption that makes the subsequent recovery and convergence analysis tractable.

\begin{assumption}\label{ass:noiseless_controllability}
The matrix pair $(\mG,\mB)$ is controllable.
\end{assumption}

Assumption~\ref{ass:noiseless_controllability} is stronger than the condition in {Theorem}~\ref{thm:noiseless_recoverability}. Indeed, if $(\mG,\mB)$ is controllable, then
$
\operatorname{rank}
\begin{bmatrix}
\lambda \mI-\mG , \valpha , \mB
\end{bmatrix}
=N,
$
and {Theorem}~\ref{thm:noiseless_recoverability} is satisfied automatically. The controllability of $(\mG,\mB)$ ensures that every network mode is excited, either directly or through $\mG$, which is crucial for exact finite-step topology recovery.

The proof of exact finite-step recovery relies on two standard auxiliary tools \cite{chen2012identification, kailath1980linear}. We record them next for later use.

\begin{lemma}\label{lem:noiseless_md}
Let $\{X_n,\mathcal{F}_n\}$ be a martingale difference sequence,
and let $\{M_n,\mathcal{F}_n\}$ be an adapted sequence
of random matrices such that $\|M_n\|<\infty$
almost surely for all $n\ge 0$.
Assume that
\begin{equation*}
    \sup_{n\ge 0}
    \mathbb{E}\!\left[
        \|X_{n+1}\|^\alpha \mid \mathcal{F}_n
    \right] < \infty
    \quad \text{a.s.}
\end{equation*}
for some $\alpha\in(0,2]$.
Then for any $\eta>0$,
\begin{equation*}
    \sum_{i=0}^n M_i X_{i+1}
    =
    O\!\left(
        s_n(\alpha)\,
        \log^{\frac{1}{\alpha}+\eta}
        \bigl(s_n^\alpha(\alpha)+e\bigr)
    \right)
    \quad \text{a.s.},
\end{equation*}
where
$
    s_n(\alpha)
    =
    \left(
        \sum_{i=0}^n \|M_i\|^\alpha
    \right)^{\frac{1}{\alpha}}.
$
\end{lemma}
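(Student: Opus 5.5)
The statement immediately above is Lemma~\ref{lem:noiseless_md}, the classical martingale-difference growth-rate estimate. I would prove it in the standard way, by reducing it to a convergence theorem for weighted martingale sums and then applying Kronecker's lemma.

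\textbf{Normalization.} Put $s_n:=s_n(\alpha)$. Fix $\eta>0$ and define the $\mathcal{F}_n$-measurable weights
\[
w_i:=\Bigl(s_i\,\log^{\frac1\alpha+\eta}(s_i^\alpha+e)\Bigr)^{-1}.
\]
If $s_i=0$, the corresponding term $M_iX_{i+1}$ vanishes, so we set $w_i=0$ there and such indices do not matter. Consider the normalized martingale
\[
S_n:=\sum_{i=0}^n w_iM_iX_{i+1}.
\]
It suffices to show that $S_n$ converges almost surely. Indeed, on the event $\{s_n\to\infty\}$, the sequence $1/w_n$ is nondecreasing and diverges. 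Kronecker's lemma then gives
\[
w_n\sum_{i\le n}M_iX_{i+1}\to 0,
\]
which is the claimed $O(\cdot)$ bound, in fact $o(\cdot)$. On the event $\{s_\infty<\infty\}$, the weights $w_i$ are bounded below by a positive constant once $s_i>0$. So convergence of $S_n$ gives convergence of $\sum_i M_iX_{i+1}$, hence boundedness, and the bound holds trivially.

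\textbf{Convergence of $S_n$.} This is the main step. For $\alpha\in[1,2]$, I would invoke the Chow-type convergence theorem: a martingale $\sum_i Y_{i+1}$ with
\[
\sum_i \mathbb{E}\bigl[\|Y_{i+1}\|^\alpha\mid\mathcal{F}_i\bigr]<\infty \quad\text{a.s.}
\]
converges a.s. Here $Y_{i+1}=w_iM_iX_{i+1}$, so
\[
\mathbb{E}\bigl[\|Y_{i+1}\|^\alpha\mid\mathcal{F}_i\bigr]\le \sigma\, w_i^\alpha\|M_i\|^\alpha,
\qquad \sigma:=\sup_n\mathbb{E}\bigl[\|X_{n+1}\|^\alpha\mid\mathcal{F}_n\bigr]<\infty.
\]
Write $a_i:=\|M_i\|^\alpha$ and $A_i:=\sum_{j\le i}a_j=s_i^\alpha$. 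Then $w_i^\alpha a_i = a_i/\bigl(A_i\log^{1+\alpha\eta}(A_i+e)\bigr)$. The standard integral comparison gives
\[
\frac{a_i}{A_i\log^{1+\alpha\eta}(A_i+e)}\le \int_{A_{i-1}}^{A_i}\frac{dx}{x\log^{1+\alpha\eta}(x+e)}.
\]
Summing, the integrals telescope to something bounded by $\int^\infty dx/\bigl(x\log^{1+\delta}x\bigr)<\infty$ with $\delta=\alpha\eta>0$. The first nonzero term is finite and is handled separately.

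For $\alpha\in(0,1)$, there is no martingale cancellation to exploit. Instead I would show absolute convergence. Using $\mathbb{E}\bigl[\sum_i\|Y_{i+1}\|^\alpha\bigr]$-type conditional arguments, the conditional Borel--Cantelli/Lévy extension gives that $\sum_i\|Y_{i+1}\|^\alpha<\infty$ a.s. whenever $\sum_i\mathbb{E}\bigl[\|Y_{i+1}\|^\alpha\mid\mathcal{F}_i\bigr]<\infty$. Since $\alpha<1$, this forces $\|Y_{i+1}\|\to0$. Hence eventually $\|Y_{i+1}\|\le 1$, and then $\|Y_{i+1}\|\le\|Y_{i+1}\|^\alpha$, so $\sum_i\|Y_{i+1}\|<\infty$ and $S_n$ converges absolutely.

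\textbf{Technicalities and expected obstacle.} Because $\sigma$ is only an a.s. bound, not a deterministic one, I would localize. Use the stopping events $\{\sigma\le K\}$; equivalently, truncate via the $\mathcal{F}_i$-measurable indicators $\mathbf{1}\bigl\{\sup_{j\le i}\mathbb{E}[\|X_{j+1}\|^\alpha\mid\mathcal{F}_j]\le K\bigr\}$. Then take the union over $K\in\mathbb{N}$. I expect this measurability and localization bookkeeping, together with the integral comparison handling the $\log$ exponent, to be the only real obstacle. The rest is a direct application of these classical theorems.
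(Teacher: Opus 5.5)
Your proposal is correct and is essentially the standard proof of this estimate. You weight by $w_i=(s_i\log^{1/\alpha+\eta}(s_i^\alpha+e))^{-1}$, verify $\sum_i w_i^\alpha\|M_i\|^\alpha<\infty$ by integral comparison, get convergence of the weighted sum (Chow's local theorem for $\alpha\in[1,2]$, absolute convergence for $\alpha\in(0,1)$), and conclude with Kronecker's lemma; the paper gives no proof of its own and simply cites the lemma as a standard tool from the Chen--Guo monograph, where it is proved along these lines.
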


\begin{lemma}\label{lem:noiseless_coprime}
Let $A(s)$ and $B(s)$ be polynomial matrices
with the same number of rows $m$.
Then $A(s)$ and $B(s)$ are left coprime
if and only if
$
    \operatorname{rank}
    \begin{bmatrix}
        A(s) , B(s)
    \end{bmatrix}
    = m,
    \ \forall s\in\mathbb{C}.
$
\end{lemma}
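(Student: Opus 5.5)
The plan is to reduce both sides of the equivalence to a statement about one square polynomial matrix, a greatest common left divisor of $A(s)$ and $B(s)$. Recall that $A(s)$ and $B(s)$ are left coprime when every common left divisor $L(s)$ is unimodular. Here $L(s)$ is a common left divisor if $A=LA_1$ and $B=LB_1$ for polynomial $A_1,B_1$. I will use the fact that $\mathbb{C}[s]$ is a principal ideal domain. Hence the $m\times(p+q)$ polynomial matrix $[A(s),\,B(s)]$ admits a column compression: there is a unimodular $U(s)$ such that
\[
[A(s),\,B(s)]\,U(s)=[L(s),\,0],
\]
with $L(s)$ an $m\times m$ polynomial matrix. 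This is the Hermite/Smith reduction by elementary column operations. Producing this compression, and checking that it also covers the case where $[A,\,B]$ does not have full normal rank, is the only step I expect to need care.

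First I show that $L$ is a greatest common left divisor. Partition $U^{-1}=\begin{bmatrix}X_1&X_2\\ Y_1&Y_2\end{bmatrix}$. Then $[A,\,B]=[L,\,0]U^{-1}$ gives $A=LX_1$ and $B=LX_2$, so $L$ is a common left divisor. Conversely, write $U=\begin{bmatrix}P\\ Q\end{bmatrix}$ in the first block column. Then $L=AP+BQ$. So any common left divisor $D$, with $A=DA_1$ and $B=DB_1$, satisfies $L=D(A_1P+B_1Q)$, i.e. $D$ left-divides $L$. It follows that $A,B$ are left coprime iff $L$ is unimodular. One direction is immediate since $L$ is a common divisor. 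For the other, if $L$ is unimodular and $L=DW$, then $\det D\cdot\det W$ is a nonzero constant, which forces $\det D$ to be a nonzero constant.

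Next I connect this to the rank test. For every fixed $s_0\in\mathbb{C}$, $U(s_0)$ is invertible because $\det U$ is a nonzero constant. Hence
\[
\operatorname{rank}[A(s_0),\,B(s_0)]=\operatorname{rank}L(s_0).
\]
Now $L$ is unimodular iff $\det L(s)$ is a nonzero constant. A polynomial is a nonzero constant iff it has no root in $\mathbb{C}$, and this holds iff $\operatorname{rank}L(s_0)=m$ for all $s_0$. Chaining the equivalences gives the lemma.

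For the degenerate case where $[A,\,B]$ has normal rank $r<m$, I use row compression instead. There is a unimodular $V$ such that the last $m-r$ rows of $V[A,\,B]$ vanish identically. Set
\[
D=V^{-1}\operatorname{diag}(I_r,\,sI_{m-r}).
\]
Because the last $m-r$ rows of $V[A,\,B]$ are zero, $\operatorname{diag}(I_r,\,s^{-1}I_{m-r})\,V[A,\,B]$ is still polynomial, so $D$ is a common left divisor. It is not unimodular, so $A,B$ are not coprime. At the same time the rank test fails at every $s$. Both sides are therefore false, and the equivalence still holds; this case is also consistent with the general compression argument above.
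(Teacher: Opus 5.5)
Your proof is correct and follows the usual textbook route through a greatest common left divisor. You column-compress $[A,\,B]U=[L,\,0]$, note that $\operatorname{rank}[A(s_0),\,B(s_0)]=\operatorname{rank}L(s_0)$ because $U(s_0)$ is invertible, and use that $L$ is unimodular iff $\det L$ has no root in $\mathbb{C}$. The paper does not prove this lemma itself; it only cites it as a standard result (Kailath), so there is no in-paper proof to compare against beyond that.

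One small imprecision: an $m\times m$ compression $[L,\,0]$ requires $p+q\ge m$. So the case $p+q<m$ is not literally ``consistent with the general compression argument.'' Your row-compression construction $D=V^{-1}\operatorname{diag}(I_r,\,sI_{m-r})$ already covers that case, since there $r\le p+q<m$.
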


{Since the marginal utility vector $\valpha$ is unknown but time-invariant, we eliminate it through temporal differencing and obtain the incremental model
\begin{equation}\label{non_noise}
    \vx_{t+1}
    = \vx_t
    + \mG\bigl(\vx_t-\vx_{t-1}\bigr)
    + \mB\bigl(\vu_t-\vu_{t-1}\bigr), \quad \forall t\ge 1
\end{equation}}
Define
$
\vy_{t+1}:=\vx_{t+1}-\vx_t,
$
and set
$
\vtheta^\top
=
\begin{bmatrix}
\mG , \mB , -\mB
\end{bmatrix},
\
\vphi_t
=
\begin{bmatrix}
\vy_t^\top , \vu_t^\top , \vu_{t-1}^\top
\end{bmatrix}^\top.
$
Then the noiseless incremental dynamics {\eqref{non_noise}} can be written as the linear regression
$
\vy_{t+1}=\vtheta^\top\vphi_t.
$
Therefore, if the information matrix
$
\sum_{t=1}^n \vphi_t\vphi_t^\top
$
is nonsingular, ordinary least squares exactly recovers
$
\vtheta^\top
=
\left(\sum_{t=1}^n\vy_{t+1}\vphi_t^\top\right)
\left(\sum_{t=1}^n\vphi_t\vphi_t^\top\right)^{-1},
$
and hence exactly recovers the interaction matrix $\mG$ from the first block of $\vtheta^\top$.
The next two theorems show that the decaying probing law achieves two objectives simultaneously. On the one hand, the probing amplitude vanishes over time and, therefore, does not alter the long-run behavior of repeated play. On the other hand, it still injects enough excitation into the incremental regression to make the hidden interaction matrix identifiable from finitely many samples almost surely. We begin with the exact finite-step recovery statement in the noiseless setting.
\begin{theorem}\label{thm:noiseless_exact_recovery}
Under Assumptions~\ref{ass:noiseless_stability} and \ref{ass:noiseless_controllability}, let $\vu_t=\frac{\vv_t}{t^{\frac{\epsilon}{2}}}$, where $\vv_t \sim \mathcal{N}(0, \mI)$ and $\epsilon \in [0,\frac{1}{N+1}]$. Then there exists a subset $\Omega_0$ of the probability space $\Omega$ with probability measure $\mathbb{P}(\Omega_0) = 1$ such that, for every $\omega \in \Omega_0$, there exists a finite integer threshold $n(\omega)$ for which the matrix $\mG$ can be exactly recovered from the trajectory generated by {\eqref{eq:model_probed}} for all sample sizes $n \ge n(\omega)$.
\end{theorem}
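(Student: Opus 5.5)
The plan is to reduce exact recovery to eventual nonsingularity of the information matrix $\sum_{t=1}^n\vphi_t\vphi_t^\top$. The partial sums are nondecreasing in the positive semidefinite order, so once the matrix is nonsingular at some $n(\omega)$ it stays nonsingular for all $n\ge n(\omega)$. By the least-squares identity preceding the statement, the first block of $\vtheta^\top$ then returns $\mG$ exactly for every $n\ge n(\omega)$. It therefore suffices to show that, almost surely,
\[
\lambda_{\min}\Bigl(\sum_{t=1}^n\vphi_t\vphi_t^\top\Bigr)\ \ge\ c(\omega)\, n^{1-(N+1)\epsilon}\ \ (\text{or at least }\to\infty),
\]
which I would prove by contradiction in the style of Chen--Guo excitation arguments.

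\textbf{Step 1 (filter representation).} From \eqref{non_noise}, $\vy_{t+1}=\mG\vy_t+\mB(\vu_t-\vu_{t-1})$, so with the backward shift $z$ we have $(\mI-\mG z)\vy_{t+1}=\mB(1-z)\vu_t$. Let $A(z)=\det(\mI-\mG z)$, a scalar polynomial of degree at most $N$, and let $\operatorname{adj}(\mI-\mG z)$ denote the adjugate. For any unit vector $\veta=[\veta_1^\top,\veta_2^\top,\veta_3^\top]^\top$,
\[
A(z)\,\veta^\top\vphi_t
=\Bigl[\veta_1^\top\operatorname{adj}(\mI-\mG z)\mB(1-z)z+A(z)\bigl(\veta_2^\top+\veta_3^\top z\bigr)\Bigr]\vu_t
=:\sum_{k=0}^{N+1}h_k^\top\vu_{t-k}.
\]
The initial-condition contributions decay geometrically because $\rho(\mG)<1$, and the pre-experiment inputs are zero. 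Suppose now that along a subsequence the normalized minimal eigenvalue vanishes, with eigenvectors $\veta_n\to\veta$. Since $A(z)$ has bounded coefficients, the filtered quantity $\sum_t(A(z)\veta_n^\top\vphi_t)^2$ is bounded by a constant times $\sum_t(\veta_n^\top\vphi_t)^2$, up to lower-order boundary terms.

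\textbf{Step 2 (excitation of the filtered input).} Expand
\[
\sum_t\Bigl(\sum_k h_k^\top\vu_{t-k}\Bigr)^2=\sum_k\sum_t\|h_k\|^2(t-k)^{-\epsilon}\|\vv_{t-k}\|^2/N'+\text{cross terms},
\]
where I am using $\vu_t=\vv_t/t^{\epsilon/2}$ and $N'$ is only a placeholder normalization for the diagonal terms. More precisely, the diagonal part behaves like $\sum_k\|h_k\|^2\, n^{1-\epsilon}/(1-\epsilon)$ almost surely by the strong law applied to $\|\vv_t\|^2-N$. The cross terms $\vu_{t-k}^\top\vu_{t-l}$ with $k\neq l$, as well as the centered diagonal fluctuations, are martingale-difference sums. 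Lemma~\ref{lem:noiseless_md} with $\alpha=2$ bounds them by $O\bigl(n^{(1-\epsilon)/2}\log^{1/2+\eta}n\bigr)=o(n^{1-\epsilon})$. The condition $\epsilon\le 1/(N+1)$ is what keeps the total decay across $N+1$ lags subordinate to the leading order, so that the lower bound $n^{1-(N+1)\epsilon}\to\infty$ survives. Combining this with Step 1 forces $h_k=0$ for all $k$, i.e.\ the polynomial row vector $\veta_1^\top\operatorname{adj}(\mI-\mG z)\mB(1-z)z+A(z)(\veta_2^\top+\veta_3^\top z)$ vanishes identically.

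\textbf{Step 3 (algebraic conclusion, the main obstacle).} It remains to show that this identity forces $\veta=0$, contradicting $\|\veta\|=1$. This is where Assumption~\ref{ass:noiseless_controllability} enters. Controllability gives $\operatorname{rank}[\lambda\mI-\mG,\mB]=N$ for all $\lambda$, hence $\operatorname{rank}[\mI-\mG z,\mB]=N$ for all $z\in\mathbb{C}$ (at $z=0$ trivially). At $z=1$ we additionally need $\mI-\mG$ nonsingular, which follows from $\rho(\mG)<1$. By Lemma~\ref{lem:noiseless_coprime}, $\mI-\mG z$ and $\mB z(1-z)$ are left coprime on the relevant points. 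I expect this to let me argue as follows:
\begin{itemize}
\item Dividing by $A(z)$ gives $\veta_1^\top(\mI-\mG z)^{-1}\mB z(1-z)=-(\veta_2^\top+\veta_3^\top z)$.
\item The left side must then be a polynomial of degree at most one, which coprimeness together with the degree/strict-properness count, using the factor $z$, should force to vanish unless $\veta_1=0$.
\item Once $\veta_1=0$, comparing coefficients gives $\veta_2=\veta_3=0$, since $A(0)=1$.
\end{itemize}
Handling this degree bookkeeping cleanly, and coordinating it with the $\epsilon$-dependent rate in Step 2, is the delicate part. Once it is done, the contradiction yields almost-sure nonsingularity from some finite $n(\omega)$ onward, and hence the theorem.
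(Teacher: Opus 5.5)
Your route is the paper's: filter by $\det(\mI-\mG z)$, write $\veta^\top f_t$ as a degree-$(N+1)$ polynomial in the backward shift applied to $\vu_t$, make the cross terms negligible with Lemma~\ref{lem:noiseless_md}, pass to a limiting unit vector that annihilates that polynomial row vector, and contradict this via coprimeness. Steps 1--2 are fine in substance.

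\textbf{The gap is Step 3.} It is the step that actually produces the contradiction, and the only place Assumption~\ref{ass:noiseless_controllability} is used, yet you leave it as an expectation. The mechanism you indicate is also not sufficient on its own. Left coprimeness of $\mI-\mG z$ and $\mB z(1-z)$ gives a Bezout identity $(\mI-\mG z)M(z)+\mB z(1-z)N(z)=\mI$. Multiplying on the left by $\veta_1^\top(\mI-\mG z)^{-1}$ only shows that $\mu(z):=\veta_1^\top(\mI-\mG z)^{-1}$ is a polynomial row vector. That does not force $\veta_1=0$: if $\mG$ is nilpotent, $(\mI-\mG z)^{-1}$ is already a polynomial, so the conclusion is vacuous.

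The missing idea is the descending top-coefficient argument the paper uses.
\begin{itemize}
\item Write $\mu(z)=\sum_{j=0}^{\ell}\mu_j^\top z^j$.
\item The $z^{\ell+1}$ coefficient of $\veta_1^\top=\mu(z)(\mI-\mG z)$ gives $\mu_\ell^\top\mG=0$.
\item The $z^{\ell+2}$ coefficient of $\mu(z)\mB z(1-z)=-(\veta_2^\top+\veta_3^\top z)$ gives $\mu_\ell^\top\mB=0$.
\item Hence $\mu_\ell=0$ by the PBH test at $\lambda=0$. Descending gives $\mu\equiv 0$, so $\veta_1=0$, and then $\veta_2=\veta_3=0$.
\end{itemize}

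A shorter alternative avoids coprimeness altogether. Expand $(\mI-\mG z)^{-1}=\sum_{k\ge 0}\mG^k z^k$ and match coefficients in $\veta_1^\top(\mI-\mG z)^{-1}\mB z(1-z)=-(\veta_2^\top+\veta_3^\top z)$.
\begin{itemize}
\item The $z^0$ coefficient gives $\veta_2=0$.
\item The $z^m$ coefficient for $m\ge 2$ gives $\veta_1^\top(\mG-\mI)\mG^{m-2}\mB=0$. The Kalman rank condition and invertibility of $\mI-\mG$ then give $\veta_1=0$.
\item The $z^1$ coefficient gives $\veta_3=-\mB^\top\veta_1=0$.
\end{itemize}

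\textbf{Two smaller points.} First, your explanation of the role of $\epsilon\le 1/(N+1)$ is not supported by your own Step 2. Each lag contributes a diagonal term of order $\|h_k\|^2 n^{1-\epsilon}$, with no compounding across lags, so the argument gives $\lambda_{\min}\ge c\,n^{1-\epsilon}$. The paper works with $n^{\alpha}$, $\alpha\in(\frac12,1-\epsilon)$. Moreover, at the endpoint $\epsilon=1/(N+1)$ your stated target $n^{1-(N+1)\epsilon}=1$ does not tend to infinity.

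Second, the $h_k$ depend on $n$ through $\veta_n$. So the diagonal estimate must be the matrix statement $\|\sum_{t\le n}t^{-\epsilon}(\vv_t\vv_t^\top-\mI)\|=o(n^{1-\epsilon})$, which Lemma~\ref{lem:noiseless_md} provides. The scalar strong law for $\|\vv_t\|^2-N$ is not enough.
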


% Theorem~\ref{thm:noiseless_exact_recovery} shows that the probing sequence can be chosen to satisfy two goals simultaneously: it decays fast enough not to destabilize the repeated-play process, yet it still injects enough information to make the information matrix invertible after finitely many samples almost surely.

\begin{proof}
By the preceding regression representation, exact finite-step recovery is reduced to showing that the information matrix
\(
\sum_{t=1}^n \vphi_t \vphi_t^\top
\)
becomes nonsingular after finitely many samples almost surely. The key point is that, although the probing input decays, it does not decay so fast as to destroy excitation in the regression data. Appendix~\ref{app:proof_exact_recovery} establishes this by combining a lower bound on the information matrix with a polynomial-matrix contradiction argument: any asymptotically degenerate direction would induce a polynomial relation incompatible with the controllability of $(\mG,\mB)$. Therefore, the information matrix is eventually invertible almost surely, and ordinary least squares then exactly recovers $\mG$ after a finite time.
\end{proof}

Theorem~\ref{thm:noiseless_exact_recovery} shows that the proposed probing law remains informative enough for exact topology recovery. The next theorem shows that the same probing law is also asymptotically benign, namely that the repeated-play process still converges to the Nash equilibrium.

\begin{theorem}\label{thm:noiseless_convergence}
Under Assumptions~\ref{ass:noiseless_stability}-\ref{ass:noiseless_controllability}, with $\vu_t=\frac{\vv_t}{t^{\frac{\epsilon}{2}}}$, where $\vv_t \sim \mathcal{N}(0, \mI)$ and $\epsilon \in [0,\frac{1}{N+1}]$, it holds that
$
\vx_t\to \vx^\ast \quad \text{a.s.}
$
Moreover, there exists an a.s.\ finite random variable $C_1(\omega)>0$ such that, for all sufficiently large $t$,
\begin{equation}\label{conv0}
    \|\vx_t-\vx^\ast\|
    \le C_1(\omega)\!\left(\rho^t+\frac{\sqrt{\log t}}{t^{\epsilon/2}}\right),
\end{equation}
where $\rho\in(0,1)$.
\end{theorem}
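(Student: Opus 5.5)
The approach is to write the error dynamics relative to the Nash equilibrium and split them into a deterministic transient and a stochastic convolution driven by the decaying probe. Let $\ve_t:=\vx_t-\vx^\ast$. By \eqref{NE} we have $\valpha=(\mI-\mG)\vx^\ast$, so \eqref{eq:model_probed} gives $\ve_{t+1}=\mG\ve_t+\mB\vu_t$. Unrolling yields
\[
\ve_t=\mG^t\ve_0+\sum_{s=0}^{t-1}\mG^{t-1-s}\mB\vu_s .
\]
Under Assumption~\ref{ass:noiseless_stability}, we fix any $\rho\in(\rho(\mG),1)$. There is then a constant $c>0$ with $\|\mG^k\|\le c\rho^k$ for all $k\ge0$, and the first term is bounded by $c\|\ve_0\|\rho^t$. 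Controllability (Assumption~\ref{ass:noiseless_controllability}) is not needed for convergence; it is only carried along from the statement.

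For the convolution term, I will use the almost-sure growth of the Gaussian inputs. Since the $\vv_s$ are i.i.d.\ $\mathcal{N}(0,\mI)$, we have $\mathbb{P}(\|\vv_s\|>\sqrt{4\log s})\le C s^{-2}$ for large $s$, with $C$ depending on $N$. By Borel--Cantelli, there is an a.s.\ finite $K(\omega)$ such that
\[
\|\vv_s\|\le K(\omega)\sqrt{\log(s+e)}\quad\text{for all } s\ge0 .
\]
Hence $\|\vu_s\|\le K(\omega)\sqrt{\log(s+e)}\,s^{-\epsilon/2}$ for $s\ge1$, and $\vu_0$ is a fixed finite vector. This gives
\[
\Bigl\|\sum_{s=0}^{t-1}\mG^{t-1-s}\mB\vu_s\Bigr\|\le c\,\rho^{t-1}\|\vu_0\|+cK(\omega)\sum_{s=1}^{t-1}\rho^{t-1-s}\frac{\sqrt{\log(s+e)}}{s^{\epsilon/2}} .
\]

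The main step is the deterministic estimate of this geometric-weighted sum. I split it at $s=\lfloor t/2\rfloor$. For $s\le t/2$, the weights satisfy $\rho^{t-1-s}\le\rho^{t/2-1}$, and the summands are at most $\sqrt{\log(t+e)}$. That part is therefore $O(t\sqrt{\log t}\,\rho^{t/2})$, which is $o(t^{-\epsilon/2})$ and in particular $O(\sqrt{\log t}\,t^{-\epsilon/2})$. For $s>t/2$, I use $s^{-\epsilon/2}\le 2^{\epsilon/2}t^{-\epsilon/2}$ and $\log(s+e)\le\log(t+e)$, together with $\sum_k\rho^k\le(1-\rho)^{-1}$. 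That part is at most $2^{\epsilon/2}(1-\rho)^{-1}\sqrt{\log(t+e)}\,t^{-\epsilon/2}$. Collecting the three contributions and absorbing $\|\ve_0\|$, $\|\vu_0\|$, $c$, $\rho^{-1}$ and $K(\omega)$ into $C_1(\omega)$ yields \eqref{conv0} for all sufficiently large $t$ (where $\log(t+e)\le 2\log t$).

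One edge case needs care. When $\epsilon=0$ the bound \eqref{conv0} does not vanish, so a.s.\ convergence must come from $\epsilon>0$. I would treat the stated range as $\epsilon\in(0,\frac{1}{N+1}]$ and note this explicitly. For $\epsilon>0$ we have $\sqrt{\log t}\,t^{-\epsilon/2}\to0$, so $\vx_t\to\vx^\ast$ a.s. The only probabilistic ingredient is the Borel--Cantelli bound on the Gaussian norms. Lemma~\ref{lem:noiseless_md} could alternatively be used on the weighted sum, but the direct bound is simpler and already gives the $\sqrt{\log t}$ factor.
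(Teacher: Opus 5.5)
Your proof is correct and follows the paper's argument: the error recursion $\ve_{t+1}=\mG\ve_t+\mB\vu_t$, a Borel--Cantelli bound $\|\vv_s\|=O(\sqrt{\log s})$, and a geometric convolution estimate, where your split at $s=\lfloor t/2\rfloor$ makes explicit the doubling property of $\sqrt{\log s}\,s^{-\epsilon/2}$ that the paper's appeal to $a_k$ being ``eventually decreasing'' leaves implicit (monotonicity alone would not suffice). Your caveat about $\epsilon=0$ is also right, since then $\mB\vv_t$ does not vanish and $\vx_t\not\to\vx^\ast$, so the convergence claim genuinely needs $\epsilon>0$; the only nit is that the $\chi^2_N$ tail at level $4\log s$ is $O\bigl((\log s)^{N/2-1}s^{-2}\bigr)$ rather than $O(s^{-2})$, which is still summable.
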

\begin{proof}
    Define $\ve_t=\vx_t-\vx^\ast$. For the probed dynamics \eqref{eq:model_probed}, {by using \eqref{NE}, we derive $  \ve_{t+1}=\mG\ve_t+\mB\vu_t.$}
    Because $\mG$ is stable {by Assumption \ref{ass:noiseless_stability}}, there exists a constant $\rho\in(0,1)$
    such that $\|\mG^k\|<C_G\rho^k$. Thus,
    \begin{equation*}
        \|\ve_t\|
        \leq
        C\rho^t\|\ve_0\|
        +
        C\sum_{k=1}^{t}\rho^{t-k}\frac{\|\vv_k\|}{k^{\frac{\epsilon}{2}}}.
    \end{equation*}
    Since $\vv(k) \sim \mathcal{N}(0, \mI)$, it follows from standard Gaussian tail bounds and the Borel--Cantelli lemma that there almost surely exists a positive random constant $C(\omega)$ such that $\lVert \vv(k) \rVert \le C(\omega) \sqrt{\log k}$ for sufficiently large $k$ \cite{durrett2019probability}.
    Let $a_k=\frac{\sqrt{\log{k}}}{k^{\frac{\epsilon}{2}}}$. Then we obtain
    \begin{equation}\label{conv1}
         \|\ve_t\|
         \leq
         C\rho^t\|\ve_0\|
         +
         C\sum_{k=1}^{t}\rho^{t-k}a_k.
    \end{equation}
    Since $a_k$ is eventually decreasing and converges to zero, the exponentially weighted convolution above satisfies the standard estimate
$
\sum_{k=1}^{t} \rho^{t-k} a_k = O(a_t).
$
This together with \eqref{conv1} proves \eqref{conv0},
and hence $\vx_t\to \vx^\ast$ almost surely.
\end{proof}

\section{Topology Recovery Under Decision Perturbations}

We now turn to the case of decision perturbations. In this setting, exact finite-step recovery of the full interaction matrix is no longer attainable in general. The objective instead becomes asymptotically consistent estimation of $\mG$ together with finite-time exact support recovery. To avoid the noise correlation created by temporal differencing, we work directly with the noisy first-order dynamics and jointly estimate the intercept and interaction matrix. The weighted $\ell_1$ regularization used below is inspired by Lasso-type sparse estimation, adaptive reweighting ideas, and sparse data-driven estimation methods for stochastic dynamical models \cite{tibshirani1996lasso,zou2006adaptive,buhlmann2011statistics,zhao2020sparse}. The perturbed repeated-play model becomes
\begin{equation}\label{exist_noise}
    \vx_{t+1}
    = \valpha
    + \mG \vx_t
    + \mB \vu_t
    + \vw_{t+1}.
\end{equation}

We introduce the following standard assumption on the noise sequence \cite{zhang2023multitask,gan2023distributed,zhu2024distributedls}.
\begin{assumption}\label{ass:perturbed_noise}
The noise sequence $\{\vw_t\}_{t\ge 1}$ consists of i.i.d.\ bounded random
vectors with zero mean. That is,
\[
\mathbb{E}[\vw_t]=0,
\qquad t\ge 1,
\]
and there exists a deterministic constant $C_w>0$ such that
\begin{equation}\label{eq:noise_bounded}
    \|\vw_t\|\le C_w,
    \quad \forall t\ge 1,
    \quad \text{a.s.}
\end{equation}
\end{assumption}

% Under Assumption~\ref{ass:perturbed_noise}, the appropriate target is no longer finite-step exact recovery of every entry of $\mG$, but rather almost-sure consistency together with finite-time exact support recovery. To this end, we adopt a sparse recovery procedure, summarized in Algorithm~1. Let $\widehat{\mTheta}_{n+1}:=[\hat{\valpha}_{n+1},\hat{\mG}_{n+1}]$ denote the output of Algorithm~1. The next theorem states that its interaction block $\hat{\mG}_{n+1}$ achieves both almost-sure consistency and finite-time exact support recovery.
Under Assumption~\ref{ass:perturbed_noise}, the appropriate target is no longer finite-step exact recovery of every entry of $\mG$, but rather almost-sure consistency together with finite-time exact support recovery. To this end, we adopt a sparse recovery procedure summarized in Algorithm~1. Under the conditions of Theorem \ref{thm:perturbed_sparse_recovery}, the information matrix becomes nonsingular after a finite random time almost surely, so $\lambda_{\min}(n)>0$ for all sufficiently large $n$. Thus, Step 8 of Algorithm~1 is a well-defined convex optimization problem whose solution gives the estimator. Let $\widehat{\mTheta}_{n+1}:=[\hat{\valpha}_{n+1},\hat{\mG}_{n+1}]$ denote the output of Alg.~1. 
The next theorem states that its interaction block $\hat{\mG}_{n+1}$ achieves both almost-sure consistency and finite-time exact support recovery.

\begin{algorithm}
    \caption{Finite-Time Exact Support Recovery Algorithm}
    \begin{algorithmic}[1]
    \State Initialize $\mP_0=\alpha_0 \mI_{N+1}$ and $\vTheta_0=0$.
\While{$k = 1, \dots, n$}
    \State Form the adjusted observation and regressor
    \begin{equation*}
        \hat{\vx}_{k+1} = \vx_{k+1}-\mB\vu_k,
    \end{equation*}
    \begin{equation*}
        \vz_k
        =
        \begin{bmatrix}
            1\\
            \vx_k
        \end{bmatrix}.
    \end{equation*}
    \State Update the recursive least-squares iterate
    \begin{equation*}
        \mP_{k+1}
        =
        \mP_k
        -
        \frac{\mP_k \vz_k \vz_k^\top \mP_k}{1 + \vz_k^\top \mP_k \vz_k},
    \end{equation*}
    \begin{equation*}
        \vTheta_{k+1}
        =
        \vTheta_k
        +
        \mP_{k+1} \vz_k
        \bigl(
            \hat{\vx}_{k+1}^\top - \vz_k^\top \vTheta_k
        \bigr).
    \end{equation*}
\EndWhile
    \State Set
    \begin{equation*}
        \lambda_{\max}(n)
        \triangleq
        \lambda_{\max} \left\{ \sum_{k=1}^{n} \vz_k \vz_k^\top \right\}
    \end{equation*}
    \begin{equation*}
        \lambda_{\min}(n)
        \triangleq
        \lambda_{\min} \left\{ \sum_{k=1}^{n} \vz_k \vz_k^\top \right\}.
    \end{equation*}
    Define
    \begin{equation*}
        \delta_n
        \triangleq
        \sqrt{\frac{\log \lambda_{\max}(n)}{\lambda_{\min}(n)}}.
    \end{equation*}
    Let $\widetilde{\mTheta}_{n+1}\triangleq \vTheta_{n+1}^\top$ and define the shifted pilot estimator
    \begin{equation*}
        \widehat{\vTheta}_{n+1}(s, t)
        \triangleq
        \widetilde{\mTheta}_{n+1}(s, t)
        +
        \operatorname{sgn}(\widetilde{\mTheta}_{n+1}(s, t))\delta_n.
    \end{equation*}
    \State Choose a positive sequence $\{\lambda_n\}_{n \ge 1}$ satisfying
    \begin{equation*}
        \lambda_n^2
        =
        \lambda_{\max}(n)
        \sqrt{\log(\lambda_{\max}(n))\lambda_{\min}(n)}.
    \end{equation*}
    \State Solve the convex program
    \begin{equation*}
        J_{n+1}(\vGamma)
        \triangleq
        \sum_{k=1}^{n}
        \|
            \hat{\vx}_{k+1} - \vGamma \vz_k
        \|^2
        +
        \lambda_n
        \sum_{s=1}^N \sum_{t=1}^{N+1}
        \frac{|\vGamma(s, t)|}{|\widehat{\vTheta}_{n+1}(s, t)|}
    \end{equation*}
    and set
    \begin{equation*}
        \vGamma_{n+1}
        \triangleq
        \arg\min J_{n+1}(\vGamma).
    \end{equation*}
    \State \textbf{return} $\widehat{\mTheta}_{n+1}:=\vGamma_{n+1}=[\hat{\valpha}_{n+1},\hat{\mG}_{n+1}]$
\end{algorithmic}
\end{algorithm}

\begin{theorem}\label{thm:perturbed_sparse_recovery}
    Under Assumptions~\ref{ass:noiseless_stability}, \ref{ass:noiseless_controllability} and \ref{ass:perturbed_noise}, let $\vu_t=\frac{\vv_t}{t^{\frac{\epsilon}{2}}}$, where $\vv_t \sim \mathcal{N}(0, \mI)$ and $\epsilon \in [0, \frac{1}{3(N+2)})$. There exists a subset $\Omega_0$ of the probability space $\Omega$ with probability measure $\mathbb{P}(\Omega_0) = 1$, such that for every $\omega \in \Omega_0$, $\|\hat{\mG}_{n+1}-\mG\|\to 0$ as $n\to \infty$. Moreover, there exists a finite integer threshold $n_0(\omega)$ such that, for all sample sizes $n \geq n_0(\omega)$, Algorithm~1 recovers the exact support of the true network matrix $\mG$.
\end{theorem}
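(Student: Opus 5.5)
The plan has three stages: an excitation analysis of the regressor $\vz_k=[1,\vx_k^\top]^\top$, a least-squares convergence rate for the pilot estimate, and a reweighted-Lasso optimality argument in the style of \cite{zhao2020sparse}.

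\textbf{Stage 1: eigenvalue bounds.} Write $\vTheta=[\valpha,\mG]^\top$, so that $\hat{\vx}_{k+1}=\vTheta^\top\vz_k+\vw_{k+1}$. By Assumption~\ref{ass:noiseless_stability} and the argument of Theorem~\ref{thm:noiseless_convergence} (now with the bounded noise $\vw$ added), $\vx_k$ is a.s.\ bounded by $O(\sqrt{\log k})$. Hence $\lambda_{\max}(n)=O(n\log n)$, so $\log\lambda_{\max}(n)=O(\log n)$.

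For the lower bound I would show
\[
\lambda_{\min}(n)\ge c\,n^{1-(N+2)\epsilon'}
\]
for some $\epsilon'$ slightly larger than $\epsilon$ (a small logarithmic loss is allowed). The argument follows the route of Appendix~\ref{app:proof_exact_recovery}:
\begin{itemize}
\item Express $\vx_k$ as $\valpha$-driven constants plus filtered inputs $(\mI-\mG z)^{-1}\mB z\,\vu_k$ plus filtered noise.
\item Pass to the stacked regressor $[1,\vx_k^\top,\vu_{k-1}^\top,\dots,\vu_{k-N}^\top]$, whose input part has covariance of order $k^{-\epsilon}$.
\item Use Lemma~\ref{lem:noiseless_md} to show that the martingale cross terms (input with noise, input with past input) are $o$ of the main term.
\item Show that any asymptotically degenerate unit direction $[\beta_0,\beta^\top]$ would give a polynomial identity $\beta_0(\text{const})+\beta^\top\operatorname{adj}(\mI-\mG z)\mB z\equiv 0$. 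This contradicts controllability of $(\mG,\mB)$ via Lemma~\ref{lem:noiseless_coprime}. The constant direction $1$ is separated from the rest because the inputs have zero mean.
\end{itemize}
The $N+2$ in the exponent counts the constant coordinate plus the $N$-step lag through the adjugate. The restriction $\epsilon<1/(3(N+2))$ is what makes $\lambda_{\min}(n)\gtrsim n^{2/3+\nu}$ for some $\nu>0$.

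\textbf{Stage 2: rate for the pilot estimate.} The RLS iterate coincides, up to the $\alpha_0$ regularization, with least squares. Standard LS theory \cite{chen2012identification} with bounded i.i.d.\ noise, via Lemma~\ref{lem:noiseless_md}, gives
\[
\|\widetilde{\mTheta}_{n+1}-\vTheta^\top\|=O(\delta_n),\qquad \delta_n=\sqrt{\log\lambda_{\max}(n)/\lambda_{\min}(n)}\to0 .
\]
Consequently, eventually:
\begin{itemize}
\item for $(s,t)$ in the support, $|\widehat{\vTheta}_{n+1}(s,t)|\to|\vTheta(s,t)|>0$;
\item off the support, $|\widehat{\vTheta}_{n+1}(s,t)|\le C\delta_n$ but $\ge\delta_n$, thanks to the sign shift, so the weights are well defined.
\end{itemize}

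\textbf{Stage 3: consistency and support.} Rewrite $J_{n+1}(\vGamma)-J_{n+1}(\vTheta^\top)$ in terms of $\Delta=\vGamma-\vTheta^\top$ as
\[
\operatorname{tr}(\Delta R_n\Delta^\top)-2\operatorname{tr}\bigl(\Delta\textstyle\sum_k\vz_k\vw_{k+1}^\top\bigr)+\text{penalty differences},
\]
where $R_n=\sum_k\vz_k\vz_k^\top$. The quadratic term is $\ge\lambda_{\min}(n)\|\Delta\|^2$. The noise term is $O(\sqrt{\lambda_{\max}(n)\log\lambda_{\max}(n)})\|\Delta\|$, and the on-support penalty differences are $O(\lambda_n)\|\Delta\|$. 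Since $\lambda_n/\lambda_{\min}(n)\to0$ under the exponent restriction, comparing these terms gives $\|\Delta\|\to0$, which is the consistency claim.

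For exact zeros I would use the KKT conditions. Off the support, the subgradient bound requires
\[
\Bigl|\bigl[(\vTheta^\top-\vGamma)R_n+\textstyle\sum_k\vw_{k+1}\vz_k^\top\bigr](s,t)\Bigr|<\frac{\lambda_n}{2|\widehat{\vTheta}_{n+1}(s,t)|}.
\]
The right side is at least $\lambda_n/(2C\delta_n)$. The left side is $O(\lambda_{\max}(n)\|\Delta\|+\sqrt{\lambda_{\max}(n)\log\lambda_{\max}(n)})$. The plan is to use a restricted-problem (oracle) argument: solve the problem constrained to the true support, show that the constrained solution satisfies the strict off-support KKT inequality, and conclude by uniqueness that it is the global minimizer. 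Uniqueness holds because the objective is strictly convex once $R_n>0$. On the support, consistency keeps the entries nonzero. This yields exact support recovery for all $n\ge n_0(\omega)$.

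\textbf{Main obstacle.} The hard step is the exponent bookkeeping in these two comparisons. Both $\lambda_n/\lambda_{\min}(n)\to0$ and $\lambda_n/\delta_n\gg\lambda_{\max}(n)\|\Delta\|$ must hold, with $\|\Delta\|$ on the restricted problem of order $\delta_n+\lambda_n/\lambda_{\min}(n)$. With $\lambda_{\max}\sim n$ and $\lambda_{\min}\sim n^{1-(N+2)\epsilon}$, this reduces to polynomial inequalities. I expect these to close exactly when $(N+2)\epsilon<1/3$, so the precise lower bound on $\lambda_{\min}(n)$ from Stage 1 is the crux.
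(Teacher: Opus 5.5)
Your plan is sound. It differs from the paper in two of its three stages; your consistency step is essentially the paper's Step~2, and your self-normalized noise bound $O(\sqrt{\lambda_{\max}(n)\log\lambda_{\max}(n)})\|\Delta\|$ is sharper than the paper's $\lambda_{\max}(n)\delta_n\|\Delta\|$.

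\textbf{Excitation bound.} The paper does not reuse the polynomial-matrix argument of Appendix~A. It samples at $t_k=Nk$ and isolates $\veta_k=\sum_{j=0}^{N-1}\mG^j\mB\vu_{t_k-1-j}$. These are independent across blocks, with covariance $\succeq c\,t_k^{-\epsilon}\mI$ because the controllability Gramian $\mW_N$ is positive definite. It then applies a weighted strong law and absorbs the intercept through the factorization of $\mS_n^{(z)}$ with middle factor $\diag(n,\mC_n)$. This gives $\lambda_{\min}(n)\ge c\,n^{1-\epsilon}$ directly. It needs no separate treatment of the constant regressor and no control of input--noise cross terms after filtering by $\det(\mI-\mG z)$. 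Your route instead unifies the noisy case with the noiseless analysis via Lemmas 1 and 2, at the price of that extra bookkeeping.

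Two remarks on your route. First, the factor $N+2$ is not what your subsequence/coprimeness argument produces. Carried out as in Appendix~A, it gives $\liminf_n n^{-a}\lambda_{\min}(n)>0$ for every $a<1-\epsilon$, which is more than you need. Second, the stacked vector $[1,\vx_k^\top,\vu_{k-1}^\top,\dots,\vu_{k-N}^\top]$ should serve only to represent $\det(\mI-\mG z)\vz_k$. Its own Gram matrix can be degenerate: if $\mG^N=0$ and $\vw_t\equiv 0$, then for $k\ge N$, $\vx_k$ is an exact linear combination of $1$ and $\vu_{k-1},\dots,\vu_{k-N}$. So keep the degenerate direction in $\vz_k$-coordinates, as you do.

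\textbf{Support recovery.} The paper compares $\hat{J}_{n+1}$ at the minimizer with its value after truncating to the true support. It shows that the zero-block penalty, of order $\lambda_n\|\vmu^{(2)}\|/\delta_n$, dominates the mixed terms. Your primal--dual witness (restricted problem, strict off-support KKT inequality, uniqueness from strict convexity once $\mS_n^{(z)}\succ 0$) is cleaner, and it yields uniqueness and exact zeros for all large $n$ at once.

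Both routes hinge on the same scaling, $\frac{\lambda_{\max}(n)}{\lambda_{\min}(n)}\delta_n\to 0$. Under the stated choice of $\lambda_n$ this is equivalent to $\lambda_n/\lambda_{\min}(n)\to 0$, i.e.\ $\lambda_{\min}(n)\gg n^{2/3}$ up to logarithms. That is exactly the threshold your exponent count identifies. With the paper's $n^{1-\epsilon}$ bound, it would already hold for $\epsilon<\frac13$.
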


\begin{proof}
The proof proceeds in three steps. First, working directly with the first-order perturbed model \eqref{exist_noise}, we rewrite the observations as a joint linear regression in $[\valpha,\mG]$ and prove, by a block-excitation argument, that the associated information matrix remains uniformly nondegenerate almost surely under decaying probing.
Secondly, this lower bound implies almost-sure consistency of the pilot recursive least-squares estimator and, in turn, of the weighted sparse estimator produced by Algorithm~1.
Finally, on the true zero coordinates, the adaptive penalty eventually dominates the residual estimation error, so false positives cannot persist beyond a finite random time. The complete proof is given in Appendix~\ref{app:proof_perturbed_sparse_recovery}.
\end{proof}
Theorem~\ref{thm:perturbed_sparse_recovery} shows that decision perturbations change the attainable notion of recovery: finite-step exact identification of the full matrix is generally lost, but the topology remains recoverable through finite-time exact support recovery, while the matrix estimate itself converges almost surely to the truth. The next theorem shows that, even under decision perturbations, the repeated-play process still converges to the Nash equilibrium in Ces\`aro average.

\begin{theorem}\label{thm:perturbed_convergence}
    Under Assumptions~\ref{ass:noiseless_stability} and \ref{ass:perturbed_noise}, let $\vu_t=\frac{\vv_t}{t^{\epsilon/2}}$, where $\vv_t\sim\mathcal{N}(0,\mI)$ and $\epsilon>0$. For model \eqref{exist_noise}, there holds
\begin{equation}\label{them_con2}
\frac{1}{t}\sum_{s=1}^t \vx_s \to \vx^\ast
\quad \text{a.s.}
\end{equation}
Moreover, for any $\delta>0$, there exists an a.s.\ finite random variable $C_2(\omega,\delta)>0$ such that, for all sufficiently large $t$,
\begin{equation*}
\left\|
\frac{1}{t}\sum_{s=1}^t \vx_s-\vx^\ast
\right\|
\le
C_2(\omega,\delta)
\left(
t^{-\frac{1}{2}+\delta}
+\frac1t\sum_{s=1}^t \frac{\sqrt{\log s}}{s^{\epsilon/2}}
+\frac1t
\right).
\end{equation*}
\end{theorem}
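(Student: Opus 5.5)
We work with the error $\ve_t=\vx_t-\vx^\ast$. By \eqref{NE}, model \eqref{exist_noise} becomes
\[
\ve_{t+1}=\mG\ve_t+\mB\vu_t+\vw_{t+1}.
\]
Unrolling this recursion gives
\[
\ve_t=\mG^t\ve_0+\sum_{k=0}^{t-1}\mG^{t-1-k}\bigl(\mB\vu_k+\vw_{k+1}\bigr).
\]
By Assumption~\ref{ass:noiseless_stability}, as in the proof of Theorem~\ref{thm:noiseless_convergence}, there are $C_G>0$ and $\rho\in(0,1)$ with $\|\mG^k\|\le C_G\rho^k$.

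The Ces\`aro average splits into three pieces:
\[
\frac1t\sum_{s=1}^t\ve_s=T_1(t)+T_2(t)+T_3(t),
\]
coming respectively from the initial condition, the probing input, and the noise.

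\textbf{Initial condition.} Since $\sum_s\rho^s<\infty$, we have $\|T_1(t)\|\le C\|\ve_0\|/t$. This gives the $1/t$ term.

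\textbf{Probing input.} Exactly as in Theorem~\ref{thm:noiseless_convergence}, Gaussian tails and Borel--Cantelli give $\|\vv_k\|\le C(\omega)\sqrt{\log k}$ for large $k$. Hence
\[
\Bigl\|\sum_{k\le s-1}\mG^{s-1-k}\mB\vu_k\Bigr\|\le C(\omega)\sum_k\rho^{s-1-k}a_k+(\text{finitely many early terms}),
\]
with $a_k=\sqrt{\log k}/k^{\epsilon/2}$. Averaging over $s$ and exchanging the order of summation yields
\[
\|T_2(t)\|\le C(\omega)\Bigl(\tfrac1t\sum_{s\le t}a_s+\tfrac1t\Bigr),
\]
because $\sum_{s\ge k}\rho^{s-1-k}\le 1/(1-\rho)$. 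The right side tends to $0$ since $a_s\to0$ for $\epsilon>0$.

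\textbf{Noise.} This term is the main step. Swap sums and write
\[
T_3(t)=\frac1t\sum_{k=0}^{t-1}\Bigl(\sum_{j=0}^{t-1-k}\mG^j\Bigr)\vw_{k+1}.
\]
Then use $\sum_{j=0}^{m}\mG^j=(\mI-\mG)^{-1}(\mI-\mG^{m+1})$, which is valid because $\rho(\mG)<1$ makes $\mI-\mG$ invertible. This gives
\[
T_3(t)=\frac1t(\mI-\mG)^{-1}\sum_{k=0}^{t-1}\vw_{k+1}-\frac1t(\mI-\mG)^{-1}\sum_{k=0}^{t-1}\mG^{t-k}\vw_{k+1}.
\]
The second part has norm at most $C C_w/t$ by boundedness of $\vw$ and the geometric bound.

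For the first part, $\{\vw_{k+1}\}$ is an i.i.d.\ zero-mean bounded sequence, hence a martingale difference sequence with bounded second conditional moments. Apply Lemma~\ref{lem:noiseless_md} with $M_i=\mI$ and $\alpha=2$, so that $s_n=\sqrt{n+1}$. This gives
\[
\sum_{k\le t}\vw_{k+1}=O\bigl(\sqrt t\,\log^{1/2+\eta}t\bigr)\quad\text{a.s.}
\]
for every $\eta>0$. Since $\log^{1/2+\eta}t=O(t^{\delta})$ for every $\delta>0$, the first part is $O(t^{-1/2+\delta})$ a.s.

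\textbf{Conclusion.} Collecting the three bounds and absorbing the constants into $C_2(\omega,\delta)$ gives the stated rate. All three terms vanish, which proves \eqref{them_con2}.

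The only delicate point is the null sets: we intersect countably many probability-one events, one for the Gaussian bound and one for each rational $\delta$. Controllability is not needed for this argument, consistent with the hypotheses of the theorem.
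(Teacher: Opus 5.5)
Your proposal is correct and follows the paper's proof essentially step for step. You use the same three-way split of the Ces\`aro average and the same pathwise Gaussian/Borel--Cantelli bound for the probing term. You also use the same identity $\sum_{j=0}^{m}\mG^j=(\mI-\mG)^{-1}(\mI-\mG^{m+1})$ (the paper's $\mH_\infty-\mH_m$ decomposition), which reduces the noise term to $(\mI-\mG)^{-1}\frac1t\sum_k\vw_{k+1}$ plus an $O(1/t)$ remainder. The only differences are cosmetic: you invoke Lemma~\ref{lem:noiseless_md} with $M_i=\mI$, $\alpha=2$ where the paper cites standard martingale rate results, and you index the average over $s=1,\dots,t$ exactly as in the statement, whereas the paper uses $k=0,\dots,t-1$.
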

\begin{proof}
Let $\ve_t=\vx_t-\vx^\ast$. Then
\begin{equation*}
\begin{aligned}
\ve_{t+1}
&=\mG\ve_t+\mB\vu_t+\vw_{t+1} \\
&=\mG^{t+1}\ve_0
+\sum_{k=0}^{t}\mG^{\,t-k}\mB\vu_k
+\sum_{k=0}^{t}\mG^{\,t-k}\vw_{k+1}.
\end{aligned}
\end{equation*}
Define $\bar{\ve}_t:=\frac1t\sum_{k=0}^{t-1}\ve_k$, {$\mH_m:=\sum_{j=0}^m \mG^j,$ and  $\mH_\infty:=(\mI-\mG)^{-1}.$ Then}
\[
\bar{\ve}_t
=
\frac1t\sum_{k=0}^{t-1}\mG^k\ve_0
+
\frac1t\sum_{s=1}^{t-1}\mH_{t-1-s}\mB\vu_s
+
\frac1t\sum_{s=1}^{t-1}\mH_{t-1-s}\vw_s.
\]
Since $\rho(\mG)<1$, there exist $C>0$ and $r\in(0,1)$ such that $\|\mG^k\|\le Cr^k$, $\sup_m\|\mH_m\|<\infty$, and $\|\mH_\infty-\mH_m\|\le Cr^m$. Hence the initial term is $O(t^{-1})$.

For the probing term, the same Gaussian bound and convolution estimate used in the proof of Theorem~\ref{thm:noiseless_convergence} imply
\[
\left\|
\frac1t\sum_{s=1}^{t-1}\mH_{t-1-s}\mB\vu_s
\right\|
=
O\!\left(
    \frac1t\sum_{s=1}^{t}\frac{\sqrt{\log s}}{s^{\epsilon/2}}
\right)
\quad \text{a.s.};
\]
see also \cite{zhao2020sparse,gan2023distributed} for analogous geometric-convolution estimates.

For the noise term, write
\[
\frac1t\sum_{s=1}^{t-1}\mH_{t-1-s}\vw_s
=
\mH_\infty \frac1t\sum_{s=1}^{t-1}\vw_s
-\frac1t\sum_{s=1}^{t-1}(\mH_\infty-\mH_{t-1-s})\vw_s.
\]
Because \(\{\vw_t\}\) is i.i.d., bounded, and zero mean, standard martingale rate results \cite{hallheyde1980martingale} imply
\[
\left\|
\frac1t\sum_{s=1}^{t}\vw_s
\right\|
=
O\!\left(t^{-\frac12+\delta}\right)
\quad \text{a.s.}
\]
for every $\delta>0$. Moreover, by \eqref{eq:noise_bounded} and geometric summability,
\[
\begin{aligned}
\left\|
\frac1t\sum_{s=1}^{t-1}(\mH_\infty-\mH_{t-1-s})\vw_s
\right\|
&\le
\frac{C}{t}
\sum_{s=1}^{t-1} r^{t-1-s}\|\vw_s\| \\
&\le
\frac{CC_w}{t}\sum_{j=0}^{\infty} r^j
=
O(t^{-1})
\quad \text{a.s.}
\end{aligned}
\]

Combining the above estimates yields
\[
\left\|
\frac1t\sum_{s=1}^t \vx_s-\vx^\ast
\right\|
\le
C_2(\omega,\delta)
\left(
    t^{-\frac12+\delta}
    +\frac1t\sum_{s=1}^{t}\frac{\sqrt{\log s}}{s^{\epsilon/2}}
    +\frac1t
\right)
\]
for all sufficiently large $t$. Hence, \eqref{them_con2} is established.
\end{proof}
\section{Simulation}

We illustrate the proposed recovery method on a six-player network with interaction matrix
\[
\mG=\begin{bmatrix}
0 & 0.18 & 0 & 0 & 0 & 0\\
0.12 & 0 & -0.15 & 0 & 0 & 0\\
0 & 0.10 & 0 & 0.14 & 0 & 0\\
0 & 0 & 0.16 & 0 & -0.10 & 0\\
0 & 0 & 0 & 0.13 & 0 & 0.11\\
0.09 & 0 & 0 & 0 & 0.12 & 0
\end{bmatrix}.
\]
In all simulations, we set $\mB=\mI$, so every player can be directly probed. In the noiseless case, the probing signal is chosen as $\vu_t=\frac{\vv_t}{(t+1)^{\epsilon/2}}$ with $\epsilon=0.12$. In the noisy {case, we set  $\epsilon=0.03$ and additionally add Gaussian disturbances with standard deviation $0.03$.}

Fig.~\ref{fig:noiseless_simulation} reports the noiseless case. The relative estimation error decreases steadily as more data are collected, and the support accuracy eventually reaches one, which is consistent with exact recovery in the noiseless model.

\begin{figure}[htbp]
    \centering
    \includegraphics[width=0.93\columnwidth]{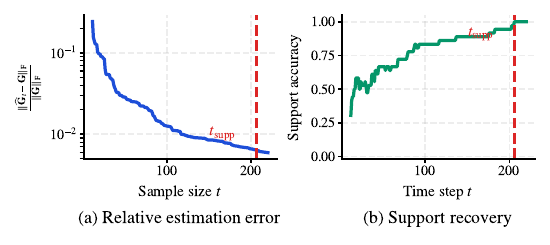}
    \caption{Noiseless simulation results.}
    \label{fig:noiseless_simulation}
\end{figure}

Fig.~\ref{fig:noisy_support} compares the support patterns in the noisy case. Here the estimates are computed directly from the first-order perturbed model \eqref{exist_noise}: the least-squares baseline jointly estimates $[\valpha,\mG]$ from $\vx_{t+1}-\mB\vu_t=\valpha+\mG\vx_t+\vw_{t+1}$, whereas the blue panel applies a finite-sample implementation of the weighted sparse recovery procedure in Algorithm~1 to the same regression. A direct least-squares estimate produces many small but nonzero spurious entries. If an edge is declared whenever $|g_{ij}|>10^{-3}$, the least-squares estimate contains 27 detected edges, while the reweighted sparse estimate contains 11 edges, matching the true network support.

\begin{figure}[htbp]
    \centering
    \includegraphics[width=0.98\columnwidth]{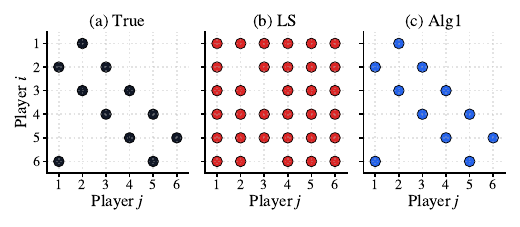}
    \caption{Support comparison in the noisy case}
    \label{fig:noisy_support}
\end{figure}

{Fig.~\ref{fig:nash_convergence}  further the convergence to the Nash equilibrium. In the noiseless case, the action profile $\vx_t$ approaches $\vx^{\ast}$. In the noisy case, the instantaneous state fluctuates because of the disturbances, but the time average
$
\bar{\vx}_t=\frac{1}{t}\sum_{s=1}^{t}\vx_s
$
converges to $\vx^{\ast}$, which agrees with the theoretical result.}

\begin{figure}[htbp]
    \centering
    \includegraphics[width=0.93\columnwidth]{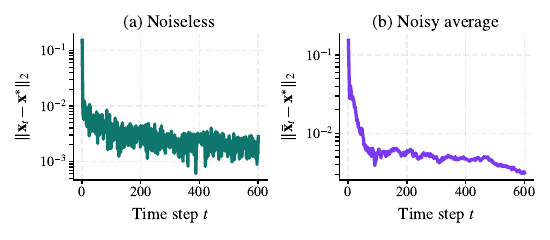}
    \caption{Convergence to the Nash equilibrium.}
    \label{fig:nash_convergence}
\end{figure}

\section{Conclusion} 

This paper studied topology recovery in repeated linear-quadratic network games under decaying active probing. We first characterized a structural recoverability condition showing when noiseless probing experiments can make the hidden interaction matrix identifiable. We then proposed a concrete decaying probing signal that preserves convergence of repeated play while guaranteeing exact finite-step recovery in the noiseless setting under suitable stability and controllability assumptions. Under decision perturbations, we further developed a reweighted sparse estimator and established almost-sure consistency together with finite-time exact support recovery.
Overall, the results show that carefully designed active probing can overcome the loss of excitation inherent in passive repeated-play observations and can reveal latent strategic interactions while preserving convergence of the repeated-play process. Future work may consider partial observation of player actions, time-varying or nonlinear strategic couplings, and probing design under stricter actuation constraints.

\appendices
\section{}\label{app:proof_exact_recovery}
\begin{proof}[Proof of Theorem~\ref{thm:noiseless_exact_recovery}]
\mbox{}

To establish finite-step recoverability, it suffices to show that, for any $\alpha\in(\frac{1}{2},1-\epsilon)$,
\begin{equation*}
    \liminf_{n\to\infty}
    \frac{1}{n^\alpha}
    \lambda_{\min}
    \!\left(
        \sum_{t=1}^n
        \vphi_t \vphi_t^\top
    \right)>0.
\end{equation*}
For subsequent analysis, define
$
\det(\mI-\mG z)=a_0+a_1z+\ldots+a_N z^N
$
and
\begin{equation*}
    f_t
    =
    \det(\mI-\mG z)\,\vphi_t
    =
    \sum_{j=0}^N a_j\vphi_{t-j}.
\end{equation*}
For convenience, define
$\lambda_{\min}(n)
=
\lambda_{\min}\!\left(
    \sum_{t=1}^n
    \vphi_t \vphi_t^\top
\right)$.
By the definition of the minimum eigenvalue,
\begin{equation*}
    \lambda_{\min}\!\left(\sum_{t=1}^n f_tf_t^\top\right)
    =
    \inf_{\|x\|=1}\sum_{t=1}^n (x^\top f_t)^2
    \leq
    (N+1) \sum_{j=0}^N a_j^2\lambda_{\min}(n).
\end{equation*}
Therefore, it suffices to prove that
\begin{equation}\label{inverse}
    \liminf_{n\to\infty}
    \frac{1}{n^\alpha}
    \lambda_{\min}\!\left(
        \sum_{t=1}^n f_t f_t^\top
    \right)>0.
\end{equation}

We argue by contradiction. Suppose that the claim {\eqref{inverse}} does not hold. Then there exists a subsequence $\{n_k\}$ and a corresponding sequence of unit vectors $\{\veta_{n_k}\}$, where
$
    \veta_{n_k}
    =
    \bigl(
        p_{n_k}^\top,\,
        (q_{n_k}^{(0)})^\top,\,
        (q_{n_k}^{(1)})^\top
    \bigr)^\top,
$
such that
\begin{equation}\label{eq:appendix_contradiction}
    \lim_{k\to\infty}
    \frac{1}{n_k^\alpha}
    \sum_{i=1}^{n_k}
    (\veta_{n_k}^\top f_i)^2
    =0.
\end{equation}

Let $A(z)=\mI-\mG z$ and $\mathcal{B}(z)=\mB(\mI-z\mI)$. For each $n_k$, define the polynomial row vector
\begin{equation*}
\begin{aligned}
\mH_{n_k}(z)
&=
p_{n_k}^\top (\operatorname{Adj} A(z)) z\mathcal{B}(z)
+ \sum_{i=0}^{1} (q_{n_k}^{(i)})^\top z^i [(\det A(z)) \mI] \\
&= \sum_{j=0}^{N+1} h_{n_k}^{(j)} z^j.
\end{aligned}
\end{equation*}

Since
$
(\det A(z))\,\vy_n = (\operatorname{Adj} A(z)) z\mathcal{B}(z)\,\vu_n,
$
we obtain
\begin{equation*}
\begin{aligned}
\veta_{n_k}^\top f_i
&=
\bigl\{
p_{n_k}^\top(\operatorname{Adj} A(z)) z\mathcal{B}(z)
+ (q_{n_k}^{(0)})^\top (\det A(z))\mI \\
&
+ (q_{n_k}^{(1)})^\top z (\det A(z))\mI
\bigr\}\vu_i \\
&=
\mH_{n_k}(z)\vu_i
=
\sum_{j=0}^{N+1} h_{n_k}^{(j)} \vu_{i-j}.
\end{aligned}
\end{equation*}
Hence, by \eqref{eq:appendix_contradiction},
$
\lim_{k\to \infty}
n_k^{-\alpha}
\sum_{i=1}^{n_k}
\left(
\sum_{j=0}^{N+1} h_{n_k}^{(j)} \vu_{i-j}
\right)^2
= 0.
$
By Lemma~\ref{lem:noiseless_md}, for any $\delta>0$,
$
\left\|
\sum_{i=1}^n \vu_{i-j} \vu_i^\top
\right\|
=
O\bigl(n^{\frac{1+\delta}{2}}\bigr).
$
Because $\alpha>\frac{1}{2}$, the cross terms are negligible relative to $n^\alpha$, and we obtain
\begin{equation*}
    \lim_{k\to \infty}
    \frac{1}{n_k^\alpha}
    \sum_{i=1}^{n_k}
    ({h_{n_k}^{(0)}} \vu_i)^2
    =0.
\end{equation*}
Since $\vu_i=\frac{\vv_i}{i^{\frac{\epsilon}{2}}}$ with $\vv_i\sim\mathcal{N}(0,\mI)$, this yields
\begin{equation*}
    \|h_{n_k}^{(0)}\|^2
    =
    o\bigl(n_k^{-(1-\epsilon-\alpha)}\bigr).
\end{equation*}
Repeating the same argument for the remaining coefficients gives
\begin{equation*}
    \lim_{k\to\infty}\|h_{n_k}^{(j)}\| = 0,
    \quad \forall j\in \{0,1,\ldots,N+1\}.
\end{equation*}
Hence $\mH_{n_k}(z)\to 0$. Since the sequence $\veta_{n_k}$ is bounded, it admits a convergent subsequence. Let
\[
\veta_0=(p^\top,(q^{(0)})^\top,(q^{(1)})^\top)^\top
\]
denote its limit with $\|\veta_0\|=1$. Passing to the limit gives
\begin{equation}\label{eq:appendix_poly_identity}
    p^\top(\operatorname{Adj}(A(z)))z\mathcal{B}(z)
    =
    -\sum_{i=0}^1 (q^{(i)})^\top z^i \det(A(z))\mI.
\end{equation}

Next, we show that the polynomial matrices $A(z)$ and $\mathcal{B}(z)$ are left coprime. By Lemma~\ref{lem:noiseless_coprime}, this is equivalent to
\begin{equation*}
    \operatorname{rank}
    \left(
    \begin{bmatrix}
    \mI-\mG z , \mB(\mI-z\mI)
    \end{bmatrix}
    \right)
    = N,
    \quad \forall z\in\mathbb{C}.
\end{equation*}
We verify this in three cases.

If $z=1$, then
$
\begin{bmatrix}
\mI-\mG , 0
\end{bmatrix}
$
has full row rank because $\mI-\mG$ is nonsingular under Assumption~\ref{ass:noiseless_stability}.
If $z=0$, then
$
\begin{bmatrix}
\mI , \mB
\end{bmatrix}
$
has a full row rank trivially.
If $z\neq 0,1$, let $\lambda=z^{-1}$. Then
\begin{equation*}
    \operatorname{rank}
    \left(
    \begin{bmatrix}
    \mI-\lambda^{-1}\mG , \mB(\mI-z\mI)
    \end{bmatrix}
    \right)
    =
    \operatorname{rank}
    \left(
    \begin{bmatrix}
    \lambda \mI-\mG , \mB
    \end{bmatrix}
    \right).
\end{equation*}
By the PBH test, Assumption~\ref{ass:noiseless_controllability} implies
\[
\operatorname{rank}
\left(
\begin{bmatrix}
\lambda \mI-\mG , \mB
\end{bmatrix}
\right)=N,
\quad \forall \lambda\in\mathbb{C}.
\]
Hence $A(z)$ and $\mathcal{B}(z)$ are left coprime.

Therefore, there exist polynomial matrices $M(z)$ and $N(z)$ such that
\begin{equation}\label{eq:appendix_bezout}
    A(z)M(z) + \mathcal{B}(z)N(z) = \mI.
\end{equation}
Multiplying \eqref{eq:appendix_bezout} by $p^\top\operatorname{Adj}(A(z))$ and using \eqref{eq:appendix_poly_identity}, we obtain
\begin{equation*}
\begin{aligned}
    p^\top\operatorname{Adj}(A(z))
    &=
    p^\top\operatorname{Adj}(A(z))(A(z)M(z)+\mathcal{B}(z)N(z)) \\
    &=
    \det(A(z))
    \left(
        p^\top M(z)
        -
        \sum_{i=0}^1
        (q^{(i)})^\top z^i N(z)
    \right).
\end{aligned}
\end{equation*}
Hence there exists a polynomial row vector $\sum_{j=0}^{\ell} (\vmu^j)^\top z^j$ such that
\begin{equation}\label{eq:appendix_rank1}
    p^\top
    =
    \left(
        \sum_{j=0}^{\ell} (\vmu^j)^\top z^j
    \right)A(z),
\end{equation}
and
\begin{equation}\label{eq:appendix_rank2}
    \sum_{i=0}^1 (q^{(i)})^\top z^i
    =
    \left(
        \sum_{j=0}^{\ell} (\vmu^j)^\top z^j
    \right) z\mathcal{B}(z).
\end{equation}
Expanding \eqref{eq:appendix_rank1} and \eqref{eq:appendix_rank2} gives
\begin{equation*}
    p^\top
    =
    \sum_{j=0}^{\ell} (\vmu^j)^\top z^j
    -
    \sum_{j=0}^{\ell} (\vmu^j)^\top \mG z^{j+1},
\end{equation*}
and
\begin{equation*}
    \sum_{i=0}^1 (q^{(i)})^\top z^i
    =
    \sum_{j=0}^{\ell} (\vmu^j)^\top \mB z^{j+1}
    -
    \sum_{j=0}^{\ell} (\vmu^j)^\top \mB z^{j+2}.
\end{equation*}
Because the left-hand sides have degrees at most $0$ and $1$, respectively, the coefficients of the highest powers of $z$ imply
\[
(\vmu^\ell)^\top \mG=0,
\qquad
(\vmu^\ell)^\top \mB=0.
\]
By the PBH test at $\lambda=0$ and Assumption~\ref{ass:noiseless_controllability}, this yields $\vmu^\ell=0$. Repeating the same argument recursively for the next highest powers shows that $\vmu^j=0$ for every $j=0,\dots,\ell$. Consequently,
\[
p=0,
\qquad
q^{(0)}=0,
\qquad
q^{(1)}=0,
\]
which contradicts $\|\veta_0\|=1$.

Therefore, the information matrix $\sum_{t=1}^n \vphi_t \vphi_t^\top$ is invertible after finitely many steps almost surely, and the matrix $\mG$ can then be exactly recovered by ordinary least squares.
% \begin{equation*}
%     \vtheta
%     =
%     \left(\sum_{t=1}^n\vphi_t\vphi_t^\top\right)^{-1}
%     \left(\sum_{t=1}^n\vphi_t \vy_{t+1}^\top\right).
% \end{equation*}
\end{proof}

\section{}\label{app:proof_perturbed_sparse_recovery}
\begin{proof}[Proof of Theorem~\ref{thm:perturbed_sparse_recovery}]
\mbox{}

We divide the proof into three steps. Step 1 establishes the lower bound for the information matrix. Step 2 proves almost-sure consistency. Step 3 proves finite-time exact support recovery.

For the noisy first-order model \eqref{exist_noise}, define
\[
\hat{\vx}_{t+1}:=\vx_{t+1}-\mB\vu_t,
\qquad
\vz_t:=
\begin{bmatrix}
    1\\
    \vx_t
\end{bmatrix},
\qquad
\mTheta^\ast:=
\begin{bmatrix}
    \valpha & \mG
\end{bmatrix}.
\]
Then
$
\hat{\vx}_{t+1}=\mTheta^\ast \vz_t+\vw_{t+1}.
$
Let
\begin{equation*}
\mS_n^{(z)}
\triangleq
\sum_{t=1}^{n}\vz_t\vz_t^\top,
\lambda_{\min}(n)
\triangleq
\lambda_{\min}\!\bigl(\mS_n^{(z)}\bigr),
\lambda_{\max}(n)
\triangleq
\lambda_{\max}\!\bigl(\mS_n^{(z)}\bigr).
\end{equation*}

It suffices to prove that, for any
\(
\alpha\in(\frac{1}{2},1-\epsilon)
\),
\begin{equation}\label{eq:appendix_first_order_lower_bound}
    \liminf_{n\to\infty}
    \frac{1}{n^\alpha}
    \lambda_{\min}\!\bigl(\mS_n^{(z)}\bigr)
    >0
    ,\ \text{a.s.}
\end{equation}

Because \(\rho(\mG)<1\), iterating \eqref{exist_noise} and using
\(
\sum_{s=1}^{n}\|\vu_s\|^2=O(n^{1-\epsilon})
\)
almost surely together with the boundedness of \(\{\vw_t\}\) gives
\begin{equation}\label{eq:appendix_state_energy}
    \sum_{t=1}^{n}\|\vx_t\|^2=O(n)
    ,\ \text{a.s.}
\end{equation}
Hence,
\begin{equation}\label{eq:appendix_first_order_upper}
    \lambda_{\max}(n)=O(n)
    ,\ \text{a.s.}
\end{equation}

It remains to prove \eqref{eq:appendix_first_order_lower_bound}. Following a block-excitation construction, let \(q:=N\), \(t_k:=qk\), and
$
A_n
\triangleq
\sum_{k=1}^{n} t_k^{-\epsilon}.
$
Since \(q\) is fixed and \(\epsilon\in[0,1)\), we have
\[
A_n=O(n^{1-\epsilon}),
\qquad
n^{1-\epsilon}=O(A_n).
\]

For each \(k\ge 1\), define
\[
\veta_k
\triangleq
\sum_{j=0}^{q-1}\mG^j\mB\vu_{t_k-1-j},
\qquad
\veta_k^\perp
\triangleq
\vx_{t_k}-\veta_k.
\]
Because the probing inputs on disjoint blocks are independent, \(\{\veta_k\}\) are independent and centered, and each \(\veta_k\) is independent of \(\veta_k^\perp\). Moreover,
\[
\mQ_k
\triangleq
\mathbb{E}\bigl[\veta_k\veta_k^\top\bigr]
=
\sum_{j=0}^{q-1}\frac{1}{(t_k-j)^\epsilon}\mG^j\mB\mB^\top(\mG^j)^\top
\]
\[
\mW_q
\triangleq
\sum_{j=0}^{q-1}\mG^j\mB\mB^\top(\mG^j)^\top.
\]
Since \(q=N\) and \((\mG,\mB)\) is controllable, \(\mW_q\) is positive definite. Therefore there exists \(c_1>0\) such that
 $\lambda_{\min}(\mQ_k)\ge c_1 t_k^{-\epsilon}$
for all sufficiently large \(k\).

Now define
\[
\mH_n
\triangleq
\sum_{k=1}^{n}\veta_k\veta_k^\top,
\qquad
\vU_n
\triangleq
\sum_{k=1}^{n}\veta_k,
\qquad
\mV_n
\triangleq
\sum_{k=1}^{n}\veta_k^\perp\veta_k^\top.
\]
Because \(\{\veta_k\}\) are independent, centered, and satisfy
\(
\mathbb{E}\|\veta_k\|^2=O(t_k^{-\epsilon})
\),
the weighted strong law together with Kronecker's lemma yields
\begin{equation}\label{eq:appendix_Hn_Un}
\mH_n-\sum_{k=1}^{n}\mQ_k=o(A_n),
\
\vU_n=o(A_n),
\
\mV_n=o(A_n)
\
\text{a.s.}
\end{equation}

% To control the cross term, define the block filtration
% \[
% \mathcal{G}_k
% :=
% \sigma\!\left(
%     \veta_1^\perp,\dots,\veta_{k+1}^\perp,\,
%     \veta_1,\dots,\veta_k
% \right).
% \]
% Each entry of \(\mV_n\) is then a martingale transform with weights \(\veta_k^\perp\) and innovations from \(\veta_k\). By \eqref{eq:appendix_state_energy},
% \[
% \sum_{k=1}^{n}\|\veta_k^\perp\|^2=O(n)
% \qquad\text{a.s.}
% \]
% Applying Lemma~\ref{lem:noiseless_md} with \(\alpha=2\) gives
% \begin{equation}\label{eq:appendix_Vn}
% \mV_n=o(A_n)
% \qquad \text{a.s.}
% \end{equation}

Next define the centered block Gramian
\[
\bar{\vx}_n^{(q)}
\triangleq
\frac1n \sum_{k=1}^{n}\vx_{t_k},
\qquad
\mC_n^{(q)}
\triangleq
\sum_{k=1}^{n}
\bigl(
    \vx_{t_k}-\bar{\vx}_n^{(q)}
\bigr)
\bigl(
    \vx_{t_k}-\bar{\vx}_n^{(q)}
\bigr)^\top.
\]
Using \(\vx_{t_k}=\veta_k^\perp+\veta_k\), we obtain
\[
\mC_n^{(q)}
=
\sum_{k=1}^{n}
\bigl(
    \veta_k-\bar{\veta}_n
\bigr)
\bigl(
    \veta_k-\bar{\veta}_n
\bigr)^\top
+
\mR_n,
\]
where \(\bar{\veta}_n=\frac1n\vU_n\) and \(\|\mR_n\|=o(A_n)\) almost surely by \eqref{eq:appendix_Hn_Un} and \eqref{eq:appendix_state_energy}. Since
\[
\sum_{k=1}^{n}
\bigl(
    \veta_k-\bar{\veta}_n
\bigr)
\bigl(
    \veta_k-\bar{\veta}_n
\bigr)^\top
=
\mH_n-n\bar{\veta}_n\bar{\veta}_n^\top
\]
and \(n\bar{\veta}_n\bar{\veta}_n^\top=o(A_n)\), \eqref{eq:appendix_Hn_Un} implies
\[
\lambda_{\min}\!\bigl(\mC_n^{(q)}\bigr)\ge c_2 A_n
\]
almost surely for all sufficiently large \(n\), with some deterministic constant \(c_2>0\).

Now define the full-sample centered Gramian
\[
\bar{\vx}_n
\triangleq
\frac1n\sum_{t=1}^{n}\vx_t,
\qquad
\mC_n
\triangleq
\sum_{t=1}^{n}
\bigl(
    \vx_t-\bar{\vx}_n
\bigr)
\bigl(
    \vx_t-\bar{\vx}_n
\bigr)^\top.
\]

Then 
$
\lambda_{\min}(\mC_n)\ge c_2 n^{1-\epsilon}
$
almost surely for all sufficiently large \(n\). Since
\[
\mS_n^{(z)}
=
\begin{bmatrix}
    1 & 0\\
    \bar{\vx}_n & \mI
\end{bmatrix}
\begin{bmatrix}
    n & 0\\
    0 & \sum_{t=1}^{n}
    \bigl(\vx_t-\bar{\vx}_n\bigr)
    \bigl(\vx_t-\bar{\vx}_n\bigr)^\top
\end{bmatrix}
\begin{bmatrix}
    1 & \bar{\vx}_n^\top\\
    0 & \mI
\end{bmatrix},
\]
and \(\|\bar{\vx}_n\|\) is almost surely bounded by \eqref{eq:appendix_state_energy}, we obtain
\[
\lambda_{\min}\!\bigl(\mS_n^{(z)}\bigr)\ge c_3 n^{1-\epsilon}
\]
almost surely for all sufficiently large \(n\). In particular, for every \(\alpha\in(\frac12,1-\epsilon)\),
\eqref{eq:appendix_first_order_lower_bound} follows. 
\begin{equation}
\frac{\lambda_{\max}(n)}{\lambda_{\min}(n)}
\sqrt{\frac{\log \lambda_{\max}(n)}{\lambda_{\min}(n)}}
\to 0
\
\text{a.s.}
\end{equation}

\textit{Step 2: prove almost-sure consistency.}
Define
\[
\vPhi_k:=\mI\otimes \vz_k,
\qquad
\vtheta:=\operatorname{vec}\!\bigl((\mTheta^\ast)^\top\bigr),
\qquad
\vtheta_{n+1}:=\operatorname{vec}\!\bigl(\widetilde{\mTheta}_{n+1}^\top\bigr).
\]
Then
$
\hat{\vx}_{k+1}
=
\vPhi_k^\top \vtheta+\vw_{k+1}.
$
Since Algorithm~1 uses the RLS recursion with initial condition \(\mP_0=\alpha_0\mI\) and \(\vTheta_0=0\), the corresponding closed-form pilot estimator is
\begin{equation*}
    \mS_n
    \triangleq
    \alpha_0^{-1}\mI+\sum_{k=1}^{n}\vPhi_k\vPhi_k^\top,
    \qquad
    \vtheta_{n+1}
    =
    \mS_n^{-1}
    \sum_{k=1}^{n}\vPhi_k\hat{\vx}_{k+1}.
\end{equation*}
Let
$
\hat{\vtheta}_{n+1}
\triangleq
\operatorname{vec}\!\bigl(\widehat{\vTheta}_{n+1}^\top\bigr).
$
For \(\vbeta\in\mathbb{R}^{N(N+1)}\), define
\begin{equation*}
    \hat{J}_{n+1}(\vbeta)
    \triangleq
    \sum_{k=1}^{n}\|
        \hat{\vx}_{k+1}-\vPhi_k^\top\vbeta
    \|^2
    +
    \lambda_n
    \sum_{l=1}^{N(N+1)}
    \frac{|\vbeta(l)|}{|\hat{\vtheta}_{n+1}(l)|}.
\end{equation*}
Let
\[
\vbeta_{n+1}
\triangleq
\arg\min_{\vbeta\in\mathbb{R}^{N(N+1)}} \hat{J}_{n+1}(\vbeta),
\qquad
\vmu_{n+1}
\triangleq
\vbeta_{n+1}-\vtheta.
\]
Since \(\vbeta_{n+1}=\operatorname{vec}(\vGamma_{n+1}^\top)\), it suffices to show \(\vbeta_{n+1}\to\vtheta\) almost surely.

Let \(A=\{l:\vtheta(l)=0\}\) and \(A^c\) denote its complement. By optimality of \(\vbeta_{n+1}\),
\[
    \hat{J}_{n+1}(\vbeta_{n+1})-\hat{J}_{n+1}(\vtheta)\le 0.
\]
After expansion,
\begin{equation*}
    \begin{aligned}
        0
        \ge\ &
        \sum_{k=1}^{n}
        \Bigl(
            -2\vmu_{n+1}^\top\vPhi_k \vw_{k+1}
            +\|\vPhi_k^\top\vmu_{n+1}\|^2
        \Bigr) \\
        &+
        \lambda_n\sum_{l\in A}
        \frac{|\vbeta_{n+1}(l)|}{|\hat{\vtheta}_{n+1}(l)|}
        +
        \lambda_n\sum_{l\in A^c}
        \frac{|\vbeta_{n+1}(l)|-|\vtheta(l)|}{|\hat{\vtheta}_{n+1}(l)|}.
    \end{aligned}
\end{equation*}
Set
\begin{equation*}
    M_n
    \triangleq
    \sum_{k=1}^{n}
    \Bigl(
        -2\vmu_{n+1}^\top\vPhi_k \vw_{k+1}
        +\|\vPhi_k^\top\vmu_{n+1}\|^2
    \Bigr).
\end{equation*}
Since
\[
\sum_{k=1}^{n}\vPhi_k\vPhi_k^\top
=
\mI\otimes \sum_{k=1}^{n} \vz_k \vz_k^\top,
\]
we have
\begin{equation*}
    \lambda_{\min}(\mS_n)=\alpha_0^{-1}+\lambda_{\min}(n),
    \qquad
    \lambda_{\max}(\mS_n)=\alpha_0^{-1}+\lambda_{\max}(n).
\end{equation*}
Hence \(\lambda_{\min}(\mS_n)\) and \(\lambda_{\max}(\mS_n)\) have the same asymptotic orders as \(\lambda_{\min}(n)\) and \(\lambda_{\max}(n)\), respectively. By a standard self-normalized martingale bound,
\[
    \left\|
    \mS_n^{-1/2}
    \left(\sum_{k=1}^{n}\vPhi_k \vw_{k+1}\right)
    \right\|
    =
    O\!\left(
        \sqrt{\log \lambda_{\max}(\mS_n)}
    \right)
    \quad \text{a.s.}
\]
Hence
\begin{equation*}
    \left|
    -2\vmu_{n+1}^\top\sum_{k=1}^{n}\vPhi_k \vw_{k+1}
    \right|
    \le
    c_1 \lambda_{\max}(n)\|\vmu_{n+1}\|
    \sqrt{\frac{\log \lambda_{\max}(n)}{\lambda_{\min}(n)}}
\end{equation*}
for some deterministic constant \(c_1>0\). Therefore,
\begin{equation*}
    M_n
    \ge
    \lambda_{\min}(n)\|\vmu_{n+1}\|^2
    -
    c_1 \lambda_{\max}(n)\|\vmu_{n+1}\|
    \sqrt{\frac{\log \lambda_{\max}(n)}{\lambda_{\min}(n)}}.
\end{equation*}

Moreover,
$
    \vtheta_{n+1}-\vtheta
    =
    \mS_n^{-1}\sum_{k=1}^{n}\vPhi_k \vw_{k+1}
    -
    \alpha_0^{-1}\mS_n^{-1}\vtheta.
$
The second term is the bias induced by the initial condition \(\mP_0=\alpha_0 \mI\), and it satisfies
$
    \left\|
    \alpha_0^{-1}\mS_n^{-1}\vtheta
    \right\|
    \le
    \frac{\alpha_0^{-1}\|\vtheta\|}{\lambda_{\min}(\mS_n)}
    \to 0
    ,\ \text{a.s.}
$
The first term satisfies
\begin{align*}
\left\|
\mS_n^{-1}\sum_{k=1}^{n}\vPhi_k \vw_{k+1}
\right\|
&\le
\frac{1}{\sqrt{\lambda_{\min}(\mS_n)}}
\left\|
\mS_n^{-1/2}\sum_{k=1}^{n}\vPhi_k \vw_{k+1}
\right\| \\
&=
O\!\left(
\sqrt{\frac{\log \lambda_{\max}(n)}{\lambda_{\min}(n)}}
\right)
\quad\text{a.s.}
\end{align*}
Therefore, \(\vtheta_{n+1}\to\vtheta\) almost surely. In particular, for
\(l\in A^c\), the denominator \(|\hat{\vtheta}_{n+1}(l)|\) is eventually
bounded away from zero. Thus,
\[
    \left|
    \sum_{l\in A^c}
    \frac{|\vbeta_{n+1}(l)|-|\vtheta(l)|}{|\hat{\vtheta}_{n+1}(l)|}
    \right|
    \le c_2 \|\vmu_{n+1}\|
\]
for some deterministic constant \(c_2>0\). Combining the above estimates gives either \(\|\vmu_{n+1}\|=0\) or
\begin{equation*}
    \|\vmu_{n+1}\|
    \le
    c_1\frac{\lambda_{\max}(n)}{\lambda_{\min}(n)}
    \sqrt{\frac{\log \lambda_{\max}(n)}{\lambda_{\min}(n)}}
    +
    c_2\frac{\lambda_n}{\lambda_{\min}(n)}.
\end{equation*}
Define
\begin{equation*}
    \rho_n
    \triangleq
    \frac{\lambda_{\max}(n)}{\lambda_{\min}(n)}
    \sqrt{\frac{\log \lambda_{\max}(n)}{\lambda_{\min}(n)}}
    +
    \frac{\lambda_n}{\lambda_{\min}(n)}.
\end{equation*}
Then \(\|\vmu_{n+1}\|=O(\rho_n)\). Under the choice
\[
\lambda_n^2
=
\lambda_{\max}(n)
\sqrt{\log(\lambda_{\max}(n))\lambda_{\min}(n)},
\]
we have
\[
\frac{\lambda_n}{\lambda_{\min}(n)}
=
\left(
\frac{\lambda_{\max}(n)}{\lambda_{\min}(n)}
\sqrt{\frac{\log \lambda_{\max}(n)}{\lambda_{\min}(n)}}
\right)^{1/2}
\to 0
\quad\text{a.s.}
\]
Hence \(\rho_n\to 0\). Therefore,
$
    \vbeta_{n+1}\to\vtheta
    ,\ \text{a.s.}
$
which is equivalent to \(\widehat{\mTheta}_{n+1}\to\mTheta^\ast\) and in particular, \(\hat{\mG}_{n+1}\to\mG\) almost surely.

\textit{Step 3: prove finite-time exact support recovery.}
Consistency already implies that every truly nonzero entry of \(\vtheta\) is eventually kept nonzero. It remains to exclude false positives on the zero set \(A\).

Suppose, to the contrary, that there exists a subsequence \(\{n_k\}\) such that
\(
\vmu_{n_k+1}(l)\neq 0
\)
for some \(l\in A\). Define
\[
    \overline{\vmu}_{n_k+1}(i)=
    \begin{cases}
        \vmu_{n_k+1}(i), & i\in A^c, \\
        0, & i\in A.
    \end{cases}
\]
Since \(\vtheta+\vmu_{n_k+1}\) minimizes \(\hat{J}_{n_k+1}\),
\[
    \hat{J}_{n_k+1}(\vtheta+\vmu_{n_k+1})
    -
    \hat{J}_{n_k+1}(\vtheta+\overline{\vmu}_{n_k+1})
    \le 0.
\]
Without loss of generality, let \(A^c=\{1,\dots,r\}\) and \(A=\{r+1,\dots, N(N+1)\}\). Partition
the matrix \(\sum_{i=1}^{n_k}\vPhi_i\vPhi_i^\top\) and the vectors
\(\widetilde{\vtheta}_{n_k+1}\triangleq\vtheta-\vtheta_{n_k+1}\) and
\(\vmu_{n_k+1}\) conformably with the decomposition \(A^c\cup A\), and denote
the resulting blocks by \(\mS_{n_k}^{(ij)}\),
\(\widetilde{\vtheta}_{n_k+1}^{(i)}\), and \(\vmu_{n_k+1}^{(i)}\).
The optimality inequality then yields one positive quadratic term on the zero block, together with mixed terms and the weighted penalty on that block. The mixed terms admit the bounds
\begin{equation*}
    \left|
    \vmu_{n_k+1}^{(2)\top}\mS_{n_k}^{(21)}\vmu_{n_k+1}^{(1)}
    \right|
    \le
    c_4\lambda_{\max}(n_k)\|\vmu_{n_k+1}^{(2)}\|\rho_{n_k},
\end{equation*}
\begin{equation*}
    \left|
    2\vmu_{n_k+1}^{(2)\top}\mS_{n_k}^{(22)}\widetilde{\vtheta}_{n_k+1}^{(2)}
    \right|
    \le
    c_5\lambda_{\max}(n_k)\|\vmu_{n_k+1}^{(2)}\|
    \sqrt{\frac{\log \lambda_{\max}(n_k)}{\lambda_{\min}(n_k)}},
\end{equation*}
and
\begin{equation*}
    \left|
    2\vmu_{n_k+1}^{(2)\top}\mS_{n_k}^{(21)}\widetilde{\vtheta}_{n_k+1}^{(1)}
    \right|
    \le
    c_5\lambda_{\max}(n_k)\|\vmu_{n_k+1}^{(2)}\|
    \sqrt{\frac{\log \lambda_{\max}(n_k)}{\lambda_{\min}(n_k)}}.
\end{equation*}
Since \(\vtheta_{n+1}\to\vtheta\), for each \(l\in A\) we have
\[
    c_6
    \sqrt{\frac{\log \lambda_{\max}(n_k)}{\lambda_{\min}(n_k)}}
    \le
    |\hat{\vtheta}_{n_k+1}(l)|
    \le
    c_7
    \sqrt{\frac{\log \lambda_{\max}(n_k)}{\lambda_{\min}(n_k)}}
\]
for some positive constants \(c_6,c_7\). Therefore,
\begin{equation*}
    \lambda_{n_k}
    \sum_{l=r+1}^{N(N+1)}
    \frac{|\vmu_{n_k+1}(l)|}{|\hat{\vtheta}_{n_k+1}(l)|}
    \ge
    \frac{c_8\lambda_{n_k}}
    {\sqrt{\frac{\log \lambda_{\max}(n_k)}{\lambda_{\min}(n_k)}}}
    \|\vmu_{n_k+1}^{(2)}\|
\end{equation*}
for some constant \(c_8>0\).

Combining the above estimates and using
\[
\vmu_{n_k+1}^{(2)\top}\mS_{n_k}^{(22)}\vmu_{n_k+1}^{(2)}
\ge
\lambda_{\min}(n_k)\|\vmu_{n_k+1}^{(2)}\|^2,
\]
we obtain
\begin{equation*}
    \begin{aligned}
        0 \ge\ &
        \lambda_{\min}(n_k)\|\vmu_{n_k+1}^{(2)}\|
        \Bigg[
            \|\vmu_{n_k+1}^{(2)}\| \\
            &-
            c_4\frac{\lambda_{\max}(n_k)}{\lambda_{\min}(n_k)}\rho_{n_k}
            -
            c_5\frac{\lambda_{\max}(n_k)}{\lambda_{\min}(n_k)}
            \sqrt{\frac{\log \lambda_{\max}(n_k)}{\lambda_{\min}(n_k)}} \\
            &+
            \frac{c_8\lambda_{n_k}}
            {\lambda_{\min}(n_k)
            \sqrt{\frac{\log \lambda_{\max}(n_k)}{\lambda_{\min}(n_k)}}}
        \Bigg].
    \end{aligned}
\end{equation*}
By the estimate established at the end of Step 1,
\[
\frac{\lambda_{\max}(n_k)}{\lambda_{\min}(n_k)}
\sqrt{\frac{\log \lambda_{\max}(n_k)}{\lambda_{\min}(n_k)}}
\to 0
\quad\text{a.s.}
\]
Together with the choice of \(\lambda_n\), this implies that, for any sufficiently large \(k\), the positive penalty term dominates the negative terms in the bracket. Hence, the bracket is eventually strictly positive, which contradicts the assumption \(\vmu_{n_k+1}^{(2)}\neq 0\).
%  Therefore,
% \[
% \vmu_{n_k+1}^{(2)}=0
% \]
% for all sufficiently large \(k\) almost surely. In other words, every truly zero entry is removed after a finite random time almost surely. Combined with the consistency already proved on the nonzero entries, this yields finite-time exact support recovery for the block corresponding to \(\mG\). This completes the proof.
\end{proof}

\bibliographystyle{IEEEtran}
\bibliography{myref}

\end{document}